\newtheorem{theorem}{Theorem}
\DeclareMathOperator{\rr}{\mathbb R}
\newtheorem{proposition}{Proposition}
\newtheorem{remark}{Remark}
\newtheorem{claim}{Claim}
\theoremstyle{definition}
\numberwithin{equation}{section}
\newtheorem*{Ak}{Acknowledgements}
\title{PERIODIC MINIMAL SURFACES IN SEMIDIRECT PRODUCTS}
\author{ANA MENEZES}
\date{}
\begin{document}
\maketitle

\begin{abstract}
In this paper we prove existence of complete minimal surfaces in some metric semidirect products. These surfaces are similar to the doubly and singly periodic Scherk minimal surfaces in $\mathbb R^3.$ In particular, we obtain these surfaces in the Heisenberg space with its canonical metric, and in Sol$_3$ with a one-parameter family of non-isometric metrics.
\end{abstract}

\vspace{0.5cm}

\textbf{Mathematics Subject Classification (2010):} 53C42.

\vspace{0.5cm}

\textbf{Key words:} Semidirect products; minimal surfaces.

\section{Introduction}
In this paper we construct examples of periodic minimal surfaces in some semidirect products $\mathbb R^2\rtimes_A \mathbb R,$ depending on the matrix $A.$ By periodic surface we mean a properly embedded surface invariant by a nontrivial discrete group of isometries.

One of the most simple examples of semidirect product is $\mathbb H^2\times\mathbb{R}=\mathbb R^2\rtimes_{A}\mathbb R,$ when we take $A=\left(\begin{array}{ccc}	1 & 0\\	0&0 \end{array}\right).$ In this space, Mazet, Rodr\' iguez and Rosenberg \cite{MMR} proved some results about periodic constant mean curvature surfaces and they constructed examples of such surfaces. One of their methods is to solve a Plateau problem for a certain contour.  In \cite{R}, using a similar technique, Rosenberg constructed examples of complete minimal surfaces in $M^2\times \mathbb R,$ where $M$ is either the two-sphere or a complete Riemannian surface with nonnegative curvature or the hyperbolic plane.

Meeks, Mira, P\' erez and Ros \cite{MMPR} have proved results concerning the geometry of solutions to Plateau type problems in metric semidirect products $\mathbb R^2\rtimes_A \mathbb R,$ when there is some geometric constraint on the boundary values of the solution (see Theorem \ref{th-MMPR}).

The first example that we construct is a complete periodic minimal surface similar to the doubly periodic Scherk minimal surface in $\mathbb R^3$. It is invariant by two translations that commute and is a four punctured sphere in the quotient of $\mathbb R^2\rtimes_A \mathbb R$ by the group of isometries generated by the two translations. In the last section we obtain a complete periodic minimal surface analogous to the singly periodic Scherk minimal surface in $\mathbb R^3.$ 

These surfaces are obtained by solving the Plateau problem for a geodesic polygonal contour $\Gamma$ (it uses a result by Meeks, Mira, P\'erez and Ros \cite{MMPR} about the geometry of solutions to the Plateau problem in semidirect products), and letting some sides of $\Gamma$ tend to infinity in length, so that the associated Plateau solutions all pass through a fixed compact region (this will be assured by the existence of minimal annuli playing the role of barriers). Then a subsequence of the Plateau solutions will converge to a minimal surface bounded by a geodesic polygon with edges of infinite length. We complete this surface by symmetry across the edges. The whole construction requires precise geometric control and uses curvature estimates for stable minimal surfaces. 

These results are obtained for semidirect products $\mathbb R^2\rtimes_A \mathbb R$ where $A=\left(\begin{array}{ccc}	0 & b\\
	c&0
\end{array}\right).$ For example, we obtain periodic minimal surfaces in the Heisenberg space, when $A=\left(\begin{array}{ccc}	0 & 1\\
	0&0
\end{array}\right),$ and in Sol$_3,$ when $A=\left(\begin{array}{ccc}	0 & 1\\
	1&0
\end{array}\right),$ with their well known Riemannian metrics. When we consider the one-parameter family of matrices $A(c)= \left(
\begin{array}{ccc}
	0 & c\\
	\frac{1}{c}&0
\end{array}\right), c\geq 1,$ we get a one-parameter family of metrics in Sol$_3$ which are not isometric.

{\small
\begin{Ak} This work is part of the author's Ph.D. thesis at IMPA. The author would like to express her sincere gratitude to her advisor Prof. Harold Rosenberg for his constant encouragement and guidance throughout the preparation of this work. The author would also like to thank Joaqu\' in P\' erez for helpful conversations about semidirect products. The author was financially suported by CNPq-Brazil and IMPA.
\end{Ak}}

\section{Preliminary Results}

Generalizing direct products, a semidirect product is a particular way in which a group can be constructed from two subgroups, one of which is a normal subgroup. As a set, it is the cartesian product of the two subgroups but with a particular multiplication operation. 

In our case, the normal subgroup is $\mathbb R^2$ and the other subgroup is $\mathbb R.$ Given a matrix $A \in \mathcal M_2(\mathbb R),$ we can consider the semidirect product $\mathbb R^2\rtimes_A \mathbb R,$ where the group operation is given by
\begin{equation}
(p_1,z_1)\ast(p_2,z_2)=(p_1+{\rm e}^{z_1A}p_2, z_1+z_2), \ p_1,p_2\in \mathbb R^2, z_1,z_2\in \mathbb R \label{eq1}
\end{equation}
and
$$
A=\left(
\begin{array}{ccc}
	a & b\\
	c&d
\end{array}\right)\in \mathcal M_2(\mathbb R).
$$

We choose coordinates $(x,y)\in \mathbb R^2, z\in \mathbb R.$ Then $\partial_x=\frac{\partial}{\partial x}, \partial_y, \partial_z$ is a parallelization of $G=\mathbb R^2\rtimes_A \mathbb R.$ Taking derivatives at $t=0$ in (\ref{eq1}) of the left multiplication by $(t,0,0)\in G$ (respectively by $(0,t,0), (0,0,t)$), we obtain the following basis $\{F_1, F_2, F_3\}$ of the right invariant vector fields on $G$:
\begin{equation}
\label{eq2}
F_1=\partial_x,\ F_2= \partial_y,\ F_3=(ax+by)\partial_x+(cx+dy)\partial_y+\partial_z.
\end{equation}

%\begin{remark}
%If $G$ is given a left invariant metric for which $\{\partial_x, \partial_y, \partial_z\}$ is an orthonormal basis at $\overrightarrow {o}=(0,0,0),$ then $F_1, F_2, F_3$ are Killing fields.
%\end{remark}

Analogously, if we take derivatives at $t=0$ in (\ref{eq1}) of the right multiplication by $(t,0,0)\in G$ (respectively by $(0,t,0), (0,0,t)$), we obtain the following basis $\{E_1, E_2, E_3\}$ of the Lie algebra of $G$:
\begin{equation}
\label{eq3}
E_1=a_{11}(z)\partial_x+a_{21}(z)\partial_y,\ E_2=a_{12}(z)\partial_x+a_{22}(z)\partial_y, \ E_3=\partial_z,
\end{equation}
where we have denoted
$${\rm e}^{zA}=\left(
\begin{array}{ccc}
a_{11}(z)&a_{12}(z)\\
a_{21}(z)&a_{22}(z)
\end{array}
\right).$$

We define the \textit{canonical left invariant metric} on $\mathbb R^2\rtimes_A \mathbb R$, denoted by $\left\langle, \right\rangle,$ to be that one for which the left invariant basis $\{E_1,E_2,E_3\}$ is orthonormal.

The expression of the Riemannian connection $\nabla$ for the canonical left invariant metric of $\mathbb R^2\rtimes_A \mathbb R$ in this frame is the following:
$$\begin{array}{lll}
\nabla_{E_1}E_1=aE_3&\nabla_{E_1}E_2=\frac{b+c}{2}E_3&\nabla_{E_1}E_3=-aE_1-\frac{b+c}{2}E_2\\
&&\\
\nabla_{E_2}E_1=\frac{b+c}{2}E_3&\nabla_{E_2}E_2=dE_3&\nabla_{E_2}E_3=-\frac{b+c}{2}E_1-dE_2\\
&&\\
\nabla_{E_3}E_1=\frac{c-b}{2}E_2&\nabla_{E_3}E_2=\frac{b-c}{2}E_1&\nabla_{E_3}E_3=0.
\end{array}
$$

In particular, for every $(x_0,y_0)\in \mathbb R^2, \gamma(z)=(x_0,y_0,z)$ is a geodesic in $G.$

\begin{remark}
As $[E_1,E_2]=0,$ thus for all $z,$ $\mathbb R^2\rtimes_A \{z\}$ is flat and the horizontal straight lines are geodesics. Moreover, the mean curvature of $\mathbb R^2\rtimes_A \{z\}$ with respect to the unit normal vector field $E_3$ is the constant $H=\mbox{tr}(A)/2.$
\label{rem1}
\end{remark}

The change from the orthonormal basis $\{E_1,E_2,E_3\}$ to the basis $\{\partial_x, \partial_y, \partial_z\}$ produces the following expression for the metric $\left\langle ,\right\rangle:$
$$
\begin{array}{rcl}
\left\langle , \right\rangle_{(x,y,z)} &=& \displaystyle [a_{11}(-z)^2+a_{21}(-z)^2]dx^2+[a_{12}(-z)^2+a_{22}(-z)^2]dy^2+dz^2\\
&&\\
&&+[a_{11}(-z)a_{12}(-z)+a_{21}(-z)a_{22}(-z)](dx\otimes dy+dy\otimes dx)\\
&&\\
&=&\displaystyle {\rm e}^{-2\mbox{tr}(A)z}\displaystyle \{[a_{21}(z)^2+a_{22}(z)^2]dx^2+[a_{11}(z)^2+a_{12}(z)^2]dy^2\} +dz^2\\
&&\\
&&-{\rm e}^{-2\mbox{tr}(A)z}[a_{11}(z)a_{21}(z)+a_{12}(z)a_{22}(z)](dx\otimes dy+dy\otimes dx).
\end{array}
$$

In particular, for every matrix $A\in\mathcal M_2(\mathbb R),$ the rotation by angle $\pi$ around the vertical geodesic $\gamma(z)=(x_0,y_0,z)$ given by the map $R(x,y,z)=(-x+2x_0,-y+2y_0, z)$ is an isometry of $(\mathbb R^2\rtimes_A \mathbb R,\left\langle , \right\rangle)$ into itself. 

\begin{remark}
As we observed, the vertical lines of $\mathbb R^2\rtimes_A \mathbb R$ are geodesics of its canonical metric. For any line $l$ in $\mathbb R^2\rtimes_A \{0\}$ let $P_l$ denote the vertical plane $\{(x,y,z): (x,y,0)\in l; z\in \mathbb R\}$ containing the set of vertical lines passing through $l.$ It follows that $P_l$ is ruled by vertical geodesics and, since rotation by angle $\pi$ around any vertical line in $P_l$ is an isometry that leaves $P_l$ invariant, then $P_l$ has zero mean curvature.
\end{remark}

Although the rotation by angle $\pi$ around horizontal geodesics is not always an isometry, we have the following result.

\begin{proposition}
Let $A=\left(\begin{array}{cc} 0&b\\c&0\end{array}\right)\in\mathcal M_2(\mathbb R)$ and consider the horizontal geodesic $\alpha=\{(x_0,t,0): t\in \mathbb R\}$ in $\mathbb R^2\rtimes_A \{0\}$ parallel to the $y$-axis. Then the rotation by angle $\pi$ around $\alpha$ is an isometry of $(\mathbb R^2\rtimes_A \mathbb R, \left\langle , \right\rangle)$ into itself. The same result is true for a horizontal geodesic parallel to the $x$-axis.
\label{iso}
\end{proposition}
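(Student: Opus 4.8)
The plan is to exhibit the rotation explicitly and verify directly that it preserves the canonical metric, using the coordinate expression for $\left\langle,\right\rangle$ displayed above. Take
\[
S(x,y,z)=(2x_0-x,\;y,\;-z).
\]
First I would record that $S$ is indeed the rotation by angle $\pi$ about $\alpha$: it fixes every point $(x_0,t,0)$ of $\alpha$, it is an involution, and its differential at such a point is $\mathrm{diag}(-1,1,-1)$, which on the tangent space is the rotation by $\pi$ fixing the axis $\mathbb R\,\partial_y$. Also $\alpha$, being a horizontal straight line in $\mathbb R^2\rtimes_A\{0\}$, is a geodesic by Remark \ref{rem1}. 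So $S$ is the map whose isometry property must be proved, and everything reduces to checking $S^*\left\langle,\right\rangle=\left\langle,\right\rangle$.

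The key algebraic input is a parity property of the entries $a_{ij}(z)$ of ${\rm e}^{zA}$, valid because $A$ has vanishing diagonal. Let $P=\mathrm{diag}(1,-1)$. A direct computation gives $PAP=-A$, and since $P^2=\mathrm{Id}$ this yields $P\,{\rm e}^{zA}\,P={\rm e}^{-zA}$ for all $z$. Comparing entries with ${\rm e}^{-zA}=\left(\begin{smallmatrix} a_{11}(-z)&a_{12}(-z)\\ a_{21}(-z)&a_{22}(-z)\end{smallmatrix}\right)$ gives
\[
a_{11}(-z)=a_{11}(z),\quad a_{22}(-z)=a_{22}(z),\quad a_{12}(-z)=-a_{12}(z),\quad a_{21}(-z)=-a_{21}(z),
\]
i.e.\ $a_{11},a_{22}$ are even and $a_{12},a_{21}$ are odd in $z$. (One could instead write ${\rm e}^{zA}$ explicitly by splitting into the cases $bc>0$, $bc<0$, $bc=0$, but the conjugation identity handles all cases at once.) Consequently, in the expression for $\left\langle,\right\rangle_{(x,y,z)}$, the coefficients of $dx^2$, $dy^2$ and $dz^2$ are even functions of $z$, while the coefficient of $dx\otimes dy+dy\otimes dx$, being a sum of products of one even and one odd function of $z$, is an odd function of $z$.

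Now I would simply pull back by $S$. Under $S$ one has $dx\mapsto -dx$, $dy\mapsto dy$, $dz\mapsto -dz$, and the coefficients are evaluated at the image point, whose $z$-coordinate is $-z$. The sign from $dx\mapsto -dx$ exactly cancels the parity sign of the off-diagonal coefficient, $(-dz)^2=dz^2$, and the even coefficients of $dx^2$, $dy^2$ are unchanged under $z\mapsto -z$; hence $S^*\left\langle,\right\rangle=\left\langle,\right\rangle$. For a horizontal geodesic parallel to the $x$-axis, say $\{(t,y_0,0):t\in\mathbb R\}$, one repeats the argument with $S'(x,y,z)=(x,\,2y_0-y,\,-z)$: the same parity relations hold, with the roles of $dx$ and $dy$ interchanged, and the off-diagonal sign is again cancelled, now by $dy\mapsto -dy$. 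I do not expect a serious obstacle; the only point requiring care is the uniform proof of the parity of the $a_{ij}$, which the relation $PAP=-A$ settles cleanly.
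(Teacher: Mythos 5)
Your proof is correct and follows essentially the same route as the paper: both write the rotation explicitly as $(x,y,z)\mapsto(2x_0-x,y,-z)$ and check directly that it preserves the coordinate expression of the metric, the key point being that $a_{11},a_{22}$ are even and $a_{12},a_{21}$ are odd in $z$ so that the sign of the off-diagonal term is cancelled by $dx\mapsto -dx$. The only (minor, and arguably cleaner) difference is that you derive the parity from the conjugation identity $P\,{\rm e}^{zA}P={\rm e}^{-zA}$ with $P=\mathrm{diag}(1,-1)$, whereas the paper reads it off from the explicit power series for ${\rm e}^{zA}$.
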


\begin{proof}
The rotation by angle $\pi$ around $\alpha$ is given by the map $\phi(x,y,z)=(-x+2x_0,y, -z),$ so $\phi_x=-\partial_x, \ \phi_y=\partial_y$ and $\phi_z=-\partial_z.$

If $A=\left(\begin{array}{cc} 0&b\\c&0\end{array}\right)$, then
$$
{\rm e}^{zA}= \displaystyle \left(\begin{array}{ll}  \displaystyle \sum_{k=0}^{\infty}{\frac{(bc)^kz^{2k}}{(2k)!}}&\displaystyle\sum_{k=1}^{\infty}{\frac{b^kc^{k-1}z^{2k-1}}{(2k-1)!}}\\
\displaystyle\sum_{k=1}^{\infty}{\frac{c^kb^{k-1}z^{2k-1}}{(2k-1)!}}&\displaystyle\sum_{k=0}^{\infty}{\frac{(bc)^kz^{2k}}{(2k)!}}\end{array}\right).
$$

Hence, $a_{11}(z)=a_{22}(z)$ and ${\rm e}^{-zA}=\left(\begin{array}{cc} a_{11}(z)&-a_{12}(z)\\-a_{21}(z)&a_{11}(z)\end{array}\right).$ Then
$$
\begin{array}{rcl}
\left\langle , \right\rangle_{(x,y,z)}&=&\displaystyle \{[a_{21}(z)^2+a_{11}(z)^2]dx^2+[a_{11}(z)^2+a_{12}(z)^2]dy^2\} +dz^2\\
&&\\
&&-[a_{11}(z)a_{21}(z)+a_{12}(z)a_{11}(z)](dx\otimes dy+dy\otimes dx)
\end{array}
$$
and
$$
\begin{array}{rcl}
\left\langle , \right\rangle_{\phi(x,y,z)}&=& \displaystyle \{[a_{21}(z)^2+a_{11}(z)^2]dx^2+[a_{11}(z)^2+a_{12}(z)^2]dy^2\} +dz^2\\
&&\\
&&+[a_{11}(z)a_{21}(z)+a_{12}(z)a_{11}(z)](dx\otimes dy+dy\otimes dx).
\end{array}
$$

Therefore, $\left\langle\phi_x,\phi_x \right\rangle_{\phi(x,y,z)}=\left\langle \partial_x,\partial_x\right\rangle_{(x,y,z)},\ \left\langle\phi_y,\phi_y \right\rangle=\left\langle \partial_y,\partial_y\right\rangle,$ \ $\left\langle\phi_z,\phi_z \right\rangle=\left\langle\partial_z,\partial_z \right\rangle,$ that is, $\phi$ is an isometry. Analogously, we can show that the rotation by angle $\pi$ around a horizontal geodesic parallel to the $x$-axis is also an isometry.
%the expression to the metric $\left\langle , \right\rangle)$ is the following:
\end{proof}

%\begin{remark}
%When we scale the matrix $A$ on $\mathbb R^2\rtimes_A \mathbb R$ by a constant
%For $\lambda >0,$ the map $\psi_{\lambda}(x,y,z)=(x,y,\frac{z}{\lambda})$ is a Lie isomorphism from $\mathbb R^2\rtimes_A \mathbb R$ into $\mathbb R^2\rtimes_{\lambda A} \mathbb R.$ 
%As $e^{z(\lambda A)}=e^{(\lambda z)A},$ thus the entries $a^{\lambda}_{i,j}(z)$ of the matrix $e^{z(\lambda A)}$ satisfy
%$$
%a^{\lambda}_{ij}(z)=a_{ij}(\lambda z),
%$$
%which implies the left invariant vector fields $E_1^{\lambda}, E_2^{\lambda}, E_3^{\lambda}$ in $\mathbb R^2\rtimes_{\lambda A}\mathbb R$ satisfy
%$$
%E_i^{\lambda}(x,y,z)= E_i(x,y, \lambda z), \ \ i=1,2,3.$$
%Although, $\psi_{\lambda}$ isn't an isometry between the canonical metrics $\left\langle ,\right\rangle_A$ on $\mathbb R^2\rtimes_A \mathbb R$ and $\left\langle ,\right\rangle_{\lambda A}$ on $\mathbb R^2\rtimes_{\lambda A}\mathbb R.$
% because $(\psi_{\lambda})_{\ast}(E_i)=E_i^{\lambda}$ for $i=1,2$ and  $(\psi_{\lambda})_{\ast}(E_3)=\frac{1}{\lambda}E_3^{\lambda}.$ 
%In the other hand, the diffeomorphism $(x,y,z) \in \mathbb R^2\rtimes_A \mathbb R \stackrel{\phi_{\lambda}}{\rightarrow} (\frac{x}{\lambda}, \frac{y}{\lambda},\frac{z}{\lambda})\in \mathbb R^2\rtimes_{\lambda A}\mathbb R$ satisfies $\phi_{\lambda}^{\ast}(\left\langle ,\right\rangle_{\lambda A})=\frac{1}{\lambda^2}\left\langle ,\right\rangle_A,$ although $\phi$ is not a group homomorphism. In particular, $\mathbb R^2\rtimes_A \mathbb R$ is not isometric to $\mathbb R^2\rtimes_{\lambda A}\mathbb R.$

%\end{remark}
%
\begin{remark}
When the matrix $A$ in $\mathbb R^2\rtimes_A \mathbb R$ is $\left(
\begin{array}{ccc}
	0 & 1\\
	0&0
\end{array}\right)$ and  $\left(
\begin{array}{ccc}
	0 & 1\\
	1&0
\end{array}\right),$ we have the Heisenberg space and Sol$_{3},$ respectively, with their well known Riemannian metrics. When we consider the one-parameter family of matrices $A(c)= \left(
\begin{array}{ccc}
	0 & c\\
	\frac{1}{c}&0
\end{array}\right), c\geq 1,$ we get a one-parameter family of metrics in Sol$_3$ which are not isometric. For more details, see \cite{MP}.
\end{remark}

Meeks, Mira, P\' erez and Ros \cite{MMPR} have proved results concerning the geometry of solutions to Plateau type problems in metric semidirect products $\mathbb R^2\rtimes_A \mathbb R,$ when there is some geometric constraint on the boundary values of the solution. More precisely, they proved the following theorem.

\begin{theorem}[Meeks, Mira, P\'erez and Ros, \cite{MMPR}]
Let $X= \mathbb R^2 \rtimes_A \mathbb R$ be a metric semidirect product with its canonical metric and let $\Pi: \mathbb R^2 \rtimes_A \mathbb R \rightarrow \mathbb R^2 \rtimes_A \{0\}$ denote the projection $\Pi(x,y,z)=(x,y,0).$ Suppose $E$ is a compact convex disk in $\mathbb R^2 \rtimes_A \{0\}$, $C=\partial E$ and $\Gamma \subset \Pi^{-1}(C)$ is a continuous simple closed curve such that $\Pi: \Gamma \rightarrow C$ monotonically parametrizes $C.$ Then,
\label{th-MMPR}
\begin{enumerate}
\item $\Gamma$ is the boundary of a compact embedded disk $\Sigma$ of finite least area.
\item The interior of $\Sigma$ is a smooth $\Pi$-graph over the interior of $E.$
\end{enumerate}
\end{theorem}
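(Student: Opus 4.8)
The plan is to obtain $\Sigma$ as a solution of the Plateau problem inside the solid vertical cylinder $W:=\Pi^{-1}(E)$, and then to use the minimal vertical planes $P_l$ over lines $l\subset\mathbb R^2\rtimes_A\{0\}$ as barriers in a Rado-type argument to show that $\Sigma$ is a $\Pi$-graph.

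\emph{Step 1: $W$ is mean-convex.} The slice $\mathbb R^2\rtimes_A\{0\}$ is flat, so $C=\partial E$ is a Euclidean convex curve; parametrize it by arclength $c(s)$ so that its curvature satisfies $\kappa\ge 0$ with respect to the inner normal $\nu$. The boundary $\partial W=\Pi^{-1}(C)$ is ruled by the vertical geodesics $z\mapsto(c(s),z)$, and in the parametrization $F(s,z)=(c(s),z)$ we have $F_z=E_3$ and $F_s\in\mathrm{span}(E_1,E_2)$. Since $\nabla_{E_3}E_3=0$ and $\nabla_{E_i}E_j$ is a multiple of $E_3$ for $i,j\in\{1,2\}$, only the $F_s$-direction contributes to the mean curvature of $\partial W$; writing $F_s$ in the orthonormal frame and using $c''=\kappa\,\nu$ together with $\det e^{-zA}=e^{-z\,\mathrm{tr}A}>0$, a short computation shows that the mean curvature of $\partial W$ with respect to $\nu$ equals $\kappa\,e^{-z\,\mathrm{tr}A}/(2|F_s|^{3})\ge 0$. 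Hence $W$ is mean-convex, strictly so along the non-flat arcs of $C$, and the flat arcs lift to pieces of minimal planes $P_l$.

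\emph{Step 2: existence of an embedded least-area disk.} As $X$ is a homogeneous, hence homogeneously regular, Riemannian $3$-manifold and $\Gamma$ is a compact Jordan curve, Morrey's solution of the Plateau problem provides a disk of finite least area spanning $\Gamma$ (finiteness is clear, since $\Gamma$ is compact and can be capped by a finite-area disk). Because $\Gamma$ lies on the mean-convex boundary of $W$, the Meeks--Yau solution of the Plateau problem in a mean-convex body upgrades this to a compact embedded least-area disk $\Sigma\subset W$ with $\partial\Sigma=\Gamma$ when $\Gamma$ is rectifiable; for a merely continuous $\Gamma$ one approximates it by rectifiable Jordan curves $\Gamma_n\subset\partial W$ with $\Pi|_{\Gamma_n}$ monotone onto $C$ and $\Gamma_n\to\Gamma$, takes the corresponding embedded least-area disks $\Sigma_n\subset W$, and passes to a limit, using curvature estimates for stable embedded minimal surfaces and the mean-convex barrier to obtain uniform local area and curvature bounds away from $\partial W$; the limit is an embedded minimal disk $\Sigma\subset W$ with $\partial\Sigma=\Gamma$, still of least area.

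\emph{Step 3: $\mathrm{int}\,\Sigma$ is a $\Pi$-graph over $\mathrm{int}\,E$.} Fix a line direction and let $\{P_l\}$ run over the corresponding parallel minimal planes. First, $\Sigma\cap P_l$ has no closed component: such a curve $\gamma$ would bound a compact minimal subdisk $D\subset\Sigma$ with $\partial D=\gamma\subset P_l$, and translating $P_l$ parallel to itself to a first point of contact with the compact set $D$ would, by the interior strong maximum principle, force $D$ — hence $\Sigma$, hence $\Gamma$ — into a vertical plane, contradicting that $\Pi(\Gamma)=C$ is not contained in a line. Secondly, since $E$ is convex, $P_l\cap C$ consists of at most two points, so $\Sigma\cap P_l$ is at most one arc joining two points of $\Gamma$, or is empty. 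Feeding these two facts, the convexity of $E$, and the monotonicity of $\Pi|_\Gamma$ into the maximum-principle comparison of $\Sigma$ against the family $\{P_l\}$ — this is precisely Rado's classical argument — one concludes that no vertical line meets $\mathrm{int}\,\Sigma$ in two points and that no interior tangent plane of $\Sigma$ is vertical. Thus $\Pi|_{\mathrm{int}\,\Sigma}$ is an injective immersion into $\mathrm{int}\,E$; being open, its image is all of $\mathrm{int}\,E$ by connectedness, and interior elliptic regularity for the minimal surface equation satisfied by the graph yields smoothness.

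\emph{Main obstacle.} The delicate point is Step 2 for a merely continuous boundary curve: controlling the convergence $\Sigma_n\to\Sigma$ up to $\partial W$ so that no area or genus is lost and $\partial\Sigma=\Gamma$ exactly; this is where the mean-convexity of $W$ from Step 1 and the curvature estimates for stable minimal surfaces are essential. Alternatively, when $\Gamma$ is a graph over $C$ one can obtain both conclusions simultaneously by solving the Dirichlet problem for the minimal surface equation on $E$ by the continuity method — interior gradient estimates for minimal graphs provide interior control and the mean-convex cylinder $\partial W$ provides the boundary barrier — with the general monotone case following from a Jenkins--Serrin-type limit.
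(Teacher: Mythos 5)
This theorem is not proved in the paper at all: it is quoted verbatim from \cite{MMPR} as an external input, so there is no internal proof to measure your attempt against. That said, your outline follows exactly the strategy used in the cited source (and in Meeks--P\'erez \cite{MP}): establish that the solid cylinder $\Pi^{-1}(E)$ is mean-convex because vertical planes over lines are minimal and linear changes of the flat slice metric preserve convexity of $C$; invoke Meeks--Yau to get a compact embedded least-area disk in that mean-convex body; and run a Rad\'o-type argument, sweeping with the family of parallel vertical minimal planes $P_l$, to rule out vertical tangencies and multiple intersections with vertical lines, which gives the graph property over $\mathrm{int}\,E$. Your Step 1 computation is correct in its conclusion (the sign of $II(F_s,F_s)$ reduces to $\kappa\,\langle \nu_{euc},N\rangle_z\ge 0$, which holds because the inward normal for the height-$z$ flat metric lies in the same open half-plane as the Euclidean one), and Step 3 is the standard argument, even if only gestured at. Two spots are thinner than they should be: the parenthetical claim that a merely continuous $\Gamma$ ``can be capped by a finite-area disk'' is false for general Jordan curves and here needs the specific structure of $\Gamma$ (the union of the bottom slice $E$ with the portion of the cylinder $\Pi^{-1}(C)$ below the monotone graph $\Gamma$ is a finite-area spanning disk); and the approximation $\Gamma_n\to\Gamma$ in Step 2 requires an argument that no boundary is lost in the limit, which is where the monotonicity of $\Pi|_\Gamma$ and the barriers $P_l$ must be used again. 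Neither issue is fatal, and for the purposes of this paper the theorem is simply imported, so your sketch is an adequate account of how the cited result is established.
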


\section{A doubly periodic Scherk minimal surface}
Throughout this section, we consider the semidirect product $\mathbb R^2\rtimes_A \mathbb R$ with the canonical left invariant metric $\left\langle, \right\rangle,$ where $A=\left(\begin{array}{cc} 0&b\\c&0\end{array}\right).$ In this space, we prove the existence of a complete minimal surface analogous to Scherk's doubly periodic minimal surface in $\mathbb R^3$.

Fix $0<c_0< c_1$ and let $a$ be a sufficiently small positive quantity such that
\begin{equation}
\begin{array}{rcl}
a &<& \displaystyle\int_{c_0}^{c_1}{\sqrt{a_{11}^2(z)+a_{21}^2(z)}+\sqrt{a_{11}^2(z)+a_{12}^2(z)}}{\rm d}z\\
&&\\
&&-\displaystyle\int_{c_0}^{c_1}{\sqrt{(a_{11}(z)+a_{12}(z))^2+(a_{11}(z)+a_{21}(z))^2}}{\rm d}z.
\label{eqdefa}
\end{array}
\end{equation}
Note that such positive number $a$ exists, as $|\partial_x|=\sqrt{a_{11}^2(z)+a_{21}^2(z)},$ \ $|\partial_y|=\sqrt{a_{11}^2(z)+a_{12}^2(z)}$ and  $|\partial_x +\partial_y|=\sqrt{(a_{11}(z)+a_{12}(z))^2+(a_{11}(z)+a_{21}(z))^2}.$

For each $c>0,$ consider the polygon $P_c$ in $\mathbb {R}^2 \rtimes_A \mathbb{R}$ with the sides $\alpha_1,\alpha_2,\alpha_3^c,\alpha_4^c$ and $\alpha_5^c$ defined below.
$$\begin{array}{rcl}
\alpha_1 &=& \left\{(t,0,0): 0\leq t \leq a\right\}\\
&&\\
\alpha_2&=&\left\{(0,t,0): 0\leq t \leq a\right\}\\
&&\\
\alpha_3^c&=&\{(a,0,t):0\leq t\leq c\}\\
&&\\
\alpha_4^c&=&\{(0,a,t): 0\leq t \leq c\}\\
&&\\
\alpha_5^c&=&\{(t,-t+a,c): 0\leq t \leq a\},
\end{array}$$
as illustrated in Figure \ref{fig1}.

\begin{figure}[h]
  \centering
  \includegraphics[height=5cm]{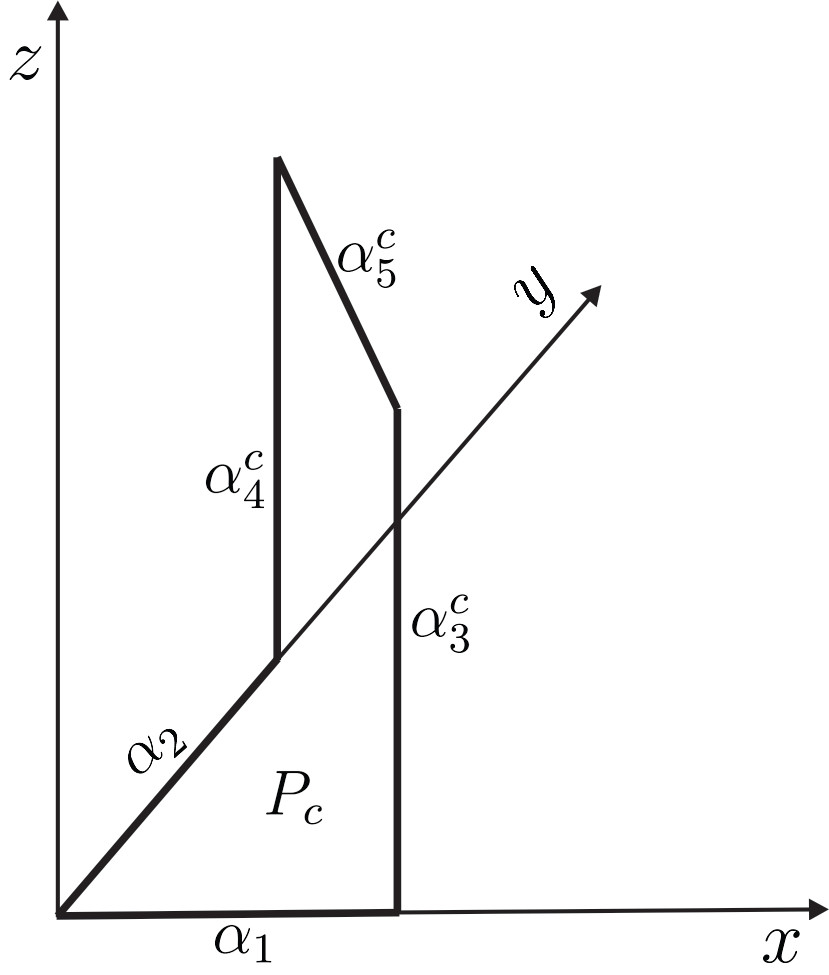}
\caption{Polygon $P_c.$}
\label{fig1}
\end{figure}

We will denote $\alpha_1^0=\{(t,0,0): 0\leq t< a\},$ $\alpha_2^0=\{(0,t,0): 0\leq t< a\},$ $\alpha_3=\{(a,0,t): t>0\}$ and $\alpha_4=\{(0,a,t): t>0\},$ hence $P_{\infty}=\alpha_1^0\cup\alpha_2^0\cup\alpha_3\cup\alpha_4\cup \{(a,0,0), (0,a,0)\}.$

Let $\Pi: \mathbb R^2 \rtimes_A \mathbb R \rightarrow \mathbb R^2 \rtimes_A \{0\}$ denote the projection $\Pi(x,y,z)=(x,y,0).$ The next proposition is proved in Lemma 1.2 in \cite{MMPR}, using the maximum principle and the fact that for every line $L\subset \mathbb R^2\rtimes_A \{0\},$ the vertical plane $\Pi^{-1}(L)$ is a minimal surface.

\begin{proposition}\label{prop-int}
Let $E$ be a compact convex disk in $\mathbb R^2 \rtimes_A \{0\}$ with boundary $C=\partial E$ and let $\Sigma$ be a compact minimal surface with boundary in $\Pi^{-1}(C).$ Then every point in $\mbox{\normalfont int}\Sigma$ is contained in $\mbox{\normalfont int}\Pi^{-1}(E).$
\end{proposition}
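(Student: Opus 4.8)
We want to show that an interior point of a compact minimal surface whose boundary lies over $C=\partial E$ must project into the \emph{interior} of the convex disk $E$. The plan is to combine the strong maximum principle for minimal surfaces with two facts already available: that $\Pi^{-1}(L)$ is a minimal surface for \emph{every} line $L\subset\mathbb R^2\rtimes_A\{0\}$ (the remark preceding the statement), and that a compact convex disk $E$ is the intersection of the closed half-planes $H_v=\{q:\langle q,v\rangle\le h_E(v)\}$ over all unit vectors $v$, where $h_E(v)=\max_{q\in E}\langle q,v\rangle$ is the support function. Writing $f_v(p)=\langle\Pi(p),v\rangle$ for $p\in\Sigma$, the statement is equivalent to: (i) $f_v\le h_E(v)$ on $\Sigma$ for every $v$, i.e.\ $\Pi(\Sigma)\subset E$; and (ii) no point of $\mathrm{int}\,\Sigma$ projects into $C$. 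We may take $\Sigma$ connected (a component has its boundary in $\Pi^{-1}(C)$ as well) and $\partial\Sigma\neq\emptyset$, since a closed minimal surface in $\mathbb R^2\rtimes_A\mathbb R$ is ruled out by applying the argument below to a foliation by parallel vertical planes.

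For (i): fix $v$ and let $M=\max_\Sigma f_v$, attained since $\Sigma$ is compact. On $\partial\Sigma\subset\Pi^{-1}(C)$ we have $f_v\le h_E(v)$. Suppose $M>h_E(v)$. Then $M$ is attained at some $p_0\in\mathrm{int}\,\Sigma$, and the vertical plane $Q=\Pi^{-1}(\{\langle q,v\rangle=M\})$ is a minimal surface through $p_0$ with $\Sigma$ contained in the closed half-space $\{f_v\le M\}$ bounded by $Q$. As $\Sigma$ touches $Q$ at $p_0$ while lying on one side, the two surfaces are tangent at $p_0$; both being minimal, the strong maximum principle, together with the fact that $\Sigma$ lies globally on one side of $Q$ and is connected, forces $\Sigma\subset Q$. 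But then $f_v\equiv M$ on $\Sigma\supset\partial\Sigma$, contradicting $f_v\le h_E(v)<M$ on $\partial\Sigma$. Hence $M\le h_E(v)$ for every $v$, i.e.\ $\Pi(\Sigma)\subset E$.

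For (ii): suppose some $p_0\in\mathrm{int}\,\Sigma$ has $q_0=\Pi(p_0)\in C$. By convexity of $E$ there is a unit vector $v$ with $\langle q_0,v\rangle=h_E(v)$; let $L=\{q:\langle q,v\rangle=h_E(v)\}$ be the corresponding supporting line, so $Q:=\Pi^{-1}(L)$ is a minimal plane through $p_0$. By (i), $f_v\le h_E(v)=f_v(p_0)$ on $\Sigma$, so $f_v$ attains its maximum at the interior point $p_0$, and exactly as above the strong maximum principle gives $\Sigma\subset Q$. This is impossible: $\partial\Sigma$ would then lie in $\Pi^{-1}(C)\cap Q=\Pi^{-1}(C\cap L)$, and when $\partial E$ is strictly convex $C\cap L$ is a single point, so $\Pi^{-1}(C\cap L)$ is a vertical geodesic $\gamma\subset Q\cong\mathbb R^2$, and an elementary planar argument (a nonempty bounded open subset of a half-plane cannot have its entire boundary on the bounding geodesic) shows $\mathrm{int}\,\Sigma=\emptyset$; in the general case one uses that the minimal surfaces of interest span non-planar curves and so are not contained in any vertical plane. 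In either case we reach a contradiction, so $\Pi(\mathrm{int}\,\Sigma)\subset\mathrm{int}\,E$, that is, $\mathrm{int}\,\Sigma\subset\mathrm{int}\,\Pi^{-1}(E)$.

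The crux of the argument, and the step demanding the most care, is the application of the strong maximum principle: one checks that at the contact point $p_0$ the minimal surfaces $\Sigma$ and $Q$ share a tangent plane (forced by one-sidedness), writes both near $p_0$ as graphs over that plane solving the minimal surface equation, applies the strong maximum principle to their difference, and then propagates the resulting coincidence to all of $\Sigma$ using connectedness of $\Sigma$ together with the global one-sidedness. The only other point requiring attention is the exclusion of the degenerate possibility $\Sigma\subset Q$ at the end of (ii), which is routine when $\partial E$ is strictly convex and otherwise rests on $\Sigma$ being a genuinely spanning surface rather than a subdomain of a vertical plane.
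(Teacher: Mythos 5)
Your proof is correct and follows exactly the route the paper attributes to Lemma 1.2 of \cite{MMPR}: the strong maximum principle applied against the family of vertical planes $\Pi^{-1}(L)$, which are minimal, with $E$ written as an intersection of supporting half-planes. The only delicate point, which you correctly isolate, is the degenerate possibility $\Sigma\subset\Pi^{-1}(L)$ over a flat edge of $C$ (the statement as written does not literally exclude, say, a compact rectangle contained in such a vertical plane), and this must indeed be ruled out by the context in which the proposition is applied, as you note.
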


Observe that, for each $c>0,$ the polygon $P_c$ is transverse to the Killing field $X=\partial_x +\partial_y$ and each integral curve of $X$ intersects $P_c$ in at most one point. From now on, denote by $P$ the commom projection of every $P_c$ over $\mathbb R^2\rtimes_A \{0\},$ that is, $P=\Pi(P_c)=\Pi(P_d)$ for any $c,d \in \mathbb R,$ and denote by $E$ the disk in $\mathbb R^2\rtimes_A \{0\}$ with boundary $P.$ Let us denote by $\mathcal R$ the region $E\times\{z\geq 0\}.$ Using Theorem \ref{th-MMPR}, we conclude that $P_c$ is the boundary of a compact embedded disk $\Sigma_c$ of finite least area and the interior of $\Sigma_c$ is a smooth $\Pi$-graph over the interior of $E.$ 

Let $\Omega_c=\{(t,-t+a,s): 0\leq t\leq a; 0\leq s \leq c\}$.

\begin{proposition}
\label{prop-unique}
If $S$ is a compact minimal surface with boundary $P_c,$ then $S=\Sigma_c.$
\end{proposition}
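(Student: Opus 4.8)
The plan is to show that \emph{every} compact minimal surface with boundary $P_c$ is a graph over the convex region $E$ (equivalently, over a suitable rectangle obtained by quotienting by a Killing field), and then to compare it with $\Sigma_c$ by the maximum principle.

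First I would localize $S$. By Proposition~\ref{prop-int}, $\mathrm{int}\,S\subset\mathrm{int}\,\Pi^{-1}(E)$. For the vertical confinement, recall from Remark~\ref{rem1} that $\mathrm{tr}(A)=0$, so every slice $\mathbb R^2\rtimes_A\{z_0\}$ is a minimal surface. Since $\partial S=P_c\subset\{0\le z\le c\}$, a point of $S$ with $z>c$ would force an interior maximum of the height function, where $S$ is tangent from below to the minimal slice through it; the interior maximum principle would then give that $S$ lies in a horizontal slice, contradicting $\partial S=P_c$. Hence $0\le z\le c$ on $S$, so $S$ is contained in the compact solid prism $\Pi^{-1}(E)\cap\{0\le z\le c\}$.

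The heart of the matter is the graph property. Here I would run a Rad\'o-type argument based on the fact that, for every line $L$ in $\mathbb R^2\rtimes_A\{0\}$, the vertical plane $\Pi^{-1}(L)$ is a minimal surface and that $\Pi$ maps $P_c$ monotonically onto the convex curve $\partial E$. I find it cleanest to phrase this via the Killing field $X=\partial_x+\partial_y$: the polygon $P_c$ is transverse to $X$ and, along the orbits of $X$, projects homeomorphically and monotonically onto the boundary of the \emph{rectangle} $\mathcal Q=\{(x-y,z):|x-y|\le a,\ 0\le z\le c\}$, while by the previous step $S$ projects into $\mathcal Q$. If $S$ failed to be an $X$-graph over $\mathcal Q$, then at some interior point $q$ the tangent plane of $S$ would contain $X_q$, and by the usual Rad\'o reasoning $S$ would meet the minimal barrier through $q$ — a vertical plane $\Pi^{-1}(L)$ or a horizontal slice $\mathbb R^2\rtimes_A\{z_0\}$, these being the supporting hypersurfaces of the rectangle $\mathcal Q$ — in at least four arcs issuing from $q$; these arcs must terminate on $\partial S=P_c$, yet $P_c$ meets each such barrier in too small a set for four disjoint arcs to do so. The main obstacle is making this last step rigorous while handling the two vertical edges $\alpha_3^c,\alpha_4^c$, over which $\Pi$ degenerates; one must also check that only vertical planes and horizontal slices — which we know are minimal, unlike generic ``tilted'' planes through $X$ — are needed as barriers. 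Once this is established, both $S$ and $\Sigma_c$ (the latter a $\Pi$-graph by Theorem~\ref{th-MMPR}) are graphs $z=u_S$, $z=u_{\Sigma_c}$ over $\mathrm{int}\,E$ solving the minimal surface equation.

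To conclude, I would set $w=u_S-u_{\Sigma_c}$ on $\mathrm{int}\,E$. The classical difference trick shows that $w$ solves a linear, locally uniformly elliptic equation with no zeroth-order term; by the first step $w$ is bounded; and $w$ extends continuously to $0$ on $\partial E$ away from the two vertices $(a,0)$ and $(0,a)$, where the boundary values of $u_S$ and $u_{\Sigma_c}$ jump from $0$ to $c$ but both stay in $[0,c]$. Comparing $w$ with small multiples of logarithmic barriers centered at these two exceptional vertices, the maximum principle yields $w\le 0$, and by symmetry $w\equiv 0$; therefore $S=\Sigma_c$.
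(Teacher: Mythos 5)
Your overall strategy (prove that $S$ is a graph, then compare graphs by the maximum principle) is much heavier than what is needed, and it has gaps at exactly the points you flag. The central one is the Rad\'o step: to produce ``at least four arcs issuing from $q$'' you need a \emph{minimal} surface \emph{tangent} to $S$ at the point $q$ where $T_qS$ contains $X_q$. The only surfaces ruled by orbits of $X=\partial_x+\partial_y$ that are known to be minimal are the vertical planes $\Pi^{-1}(L)$ with $L$ parallel to $(1,1)$ and the horizontal slices; a generic tangent plane containing $X_q$ is neither, so no admissible barrier is tangent to $S$ at $q$ and the four-arc count never gets started. You acknowledge this obstacle but do not close it. Two further problems: the proposition assumes only that $S$ is a compact minimal surface with boundary $P_c$, not that it is a disk, so the step ``a closed intersection curve bounds a disk in $S$'' is also unjustified; and in the final comparison the linearized equation for $w=u_S-u_{\Sigma_c}$ is \emph{not} uniformly elliptic near the two exceptional vertices $(a,0)$ and $(0,a)$, because the gradients of both graphs blow up along the vertical edges $\alpha_3^c,\alpha_4^c$ lying over them, so the logarithmic-barrier removability argument is not available as stated.

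The paper's proof avoids all of this with a short sweeping (Alexandrov-type) argument that uses no graph property of $S$ whatsoever. Since $P_c$ is transverse to $X$ and each orbit of $X$ meets $P_c$ in at most one point, $P_c$ projects along the orbits injectively onto $\partial\Omega_c$, while $\mbox{int}\,S$ and $\mbox{int}\,\Sigma_c$ lie in the open solid tube of orbits through $\mbox{int}\,\Omega_c$ (by Proposition \ref{prop-int} together with the slab bound $0\le z\le c$). Hence $\varphi_t(P_c)\cap S=\emptyset$ and $P_c\cap\varphi_t(\Sigma_c)=\emptyset$ for every $t\neq 0$. By compactness, $\varphi_t(\Sigma_c)$ is disjoint from $S$ for $|t|$ large; if $S\neq\Sigma_c$, sweeping back gives a last time $t_0\neq 0$ of contact, which must be an interior--interior tangency with one surface on one side of the other, and the maximum principle forces $\varphi_{t_0}(\Sigma_c)=S$, contradicting $t_0\neq 0$. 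If you want to salvage your plan, this is the mechanism to adopt; establishing that $S$ itself is a graph is never needed.
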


\begin{proof}
By Proposition \ref{prop-int}, int$\Sigma_c,$ $\mbox{\normalfont int}S \subset \mbox{\normalfont int}\Pi^{-1}(E),$ then, in particular, int$\Sigma_c,$ $\mbox{\normalfont int}S \subset \mbox{\normalfont int}\{\varphi_t(p): t\in \mathbb R; p\in \Omega_c\},$ where $\varphi_t$ is the flow of the Killing field $X.$

As $S$ is compact, there exists $t$ such that $\varphi_t(\Sigma_c)\cap S=\emptyset.$ If $S\neq\Sigma_c,$ then there exists $t_0>0$ such that $\varphi_{t_0}(\Sigma_c)\cap S\neq\emptyset$ and for $t>t_0, \varphi_{t}(\Sigma_c)\cap S=\emptyset.$ Since for all $t \neq 0, \varphi_t(P_c)\cap S=\emptyset,$ then the point of intersection is an interior point and, by the maximum principle, $ \varphi_{t_0}(\Sigma_c)=S.$ But that is a contradiction, since $t_0\neq 0.$ Therefore, $S=\Sigma_c.$
\end{proof}

The next proposition is a classical result.

\begin{proposition}\label{prop-comp}
Let $N^3$ be a homogeneous three-manifold. Let $\Sigma_n$ be an oriented properly embedded minimal surface in $N.$ Suppose there exist $c>0$ such that for all $n,$  $|A_{\Sigma_n}|\leq c,$ and a sequence of points $\{p_n\}$ in $\Sigma_n$ such that $p_n\rightarrow p\in N.$ Then there exists a subsequence of $\Sigma_n$ that converges to a complete minimal surface $\Sigma$ with $p\in \Sigma.$ Here $A_{\Sigma_n}$ denotes the second fundamental form of $\Sigma_n.$
\end{proposition}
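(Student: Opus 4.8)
The plan is to establish this as a standard compactness statement for minimal surfaces with uniformly bounded second fundamental form, following the classical argument adapted to a homogeneous ambient space. First I would fix a point $p\in N$ and use the homogeneity of $N$ together with the uniform bound $|A_{\Sigma_n}|\le c$ to obtain a uniform lower bound $\varepsilon_0>0$ on the injectivity radius of the surfaces near $p$; more precisely, I claim there is a fixed $r_0>0$ and a fixed $\delta>0$ so that for $n$ large, the connected component of $\Sigma_n\cap B_N(p_n,r_0)$ containing $p_n$ is, after composing with an isometry of $N$ sending $p_n$ to a fixed base point, a graph in geodesic normal coordinates of a function $u_n$ over a disk of radius $\delta$ in the tangent plane $T_{p_n}\Sigma_n$, with $|u_n|_{C^{1}}$ small. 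This is the usual consequence of the curvature bound: a minimal surface through a point with $|A|\le c$ is locally a graph of controlled size over its tangent plane, and the minimal surface equation for such a graph is uniformly elliptic with bounded coefficients, so Schauder estimates give uniform $C^{2,\alpha}$ bounds on $u_n$ on a slightly smaller disk.

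Next I would pass to a subsequence. Using that the Grassmannian of tangent planes at the (converging) base points is compact, I extract a subsequence along which $T_{p_n}\Sigma_n$ converges to a plane $T_\infty$, and then, by the uniform $C^{2,\alpha}$ bounds and Arzel\`a--Ascoli, a further subsequence along which $u_n\to u_\infty$ in $C^{2}$ on the disk of radius $\delta/2$. The limit $u_\infty$ satisfies the minimal surface equation, so its graph $D_\infty$ is a minimal disk through $p$. This produces the limit surface locally around $p$; the point $p$ belongs to it by construction since $u_n(0)\to 0$ after the normalization.

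Then I would globalize by a diagonal/analytic-continuation argument. Starting from $p$ and the local minimal disk $D_\infty$, I cover the would-be limit surface by a countable collection of such graph pieces: at any point $q$ already reached in $\Sigma_\infty$, there are points $q_n\in\Sigma_n$ with $q_n\to q$, and the same local argument gives a minimal graph of fixed size $\delta$ around $q$ to which a subsequence of the $\Sigma_n$ converges; a diagonal subsequence makes all these local convergences hold simultaneously. Because each step extends the surface a definite amount $\delta$ and the second fundamental form of the limit is bounded by $c$, the resulting surface $\Sigma$ is a complete immersed minimal surface (completeness: any divergent path in $\Sigma$ has infinite length, since near each point $\Sigma$ contains an intrinsic ball of fixed radius). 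Embeddedness of $\Sigma$ and properness, where needed, follow from the fact that $\Sigma_n$ are embedded and properly embedded and the convergence is locally graphical with multiplicity one, by the usual separation argument; alternatively one records that $\Sigma$ is the limit in the $C^2_{loc}$ topology of embedded minimal surfaces, hence is embedded.

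The main obstacle I expect is not any single estimate but the bookkeeping that turns the local graphical convergence into a single connected complete surface containing $p$ while keeping a fixed subsequence: one must be careful that the diagonal subsequence is chosen so that convergence is simultaneous on an exhausting sequence of compact sets, and that the pieces glue consistently (no two sheets of the $\Sigma_n$ through nearby points get identified in the limit, which is where the injectivity-radius/multiplicity-one input is used). The homogeneity of $N$ enters precisely to make the local graph size $\delta$ and the elliptic constants independent of the base point, so that the induction closes uniformly; without it one would only get a limit lamination rather than a surface.
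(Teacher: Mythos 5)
The paper offers no proof of this proposition, simply labelling it a classical result, and your argument is precisely the standard compactness proof that this label refers to: the uniform bound on $|A_{\Sigma_n}|$ together with homogeneity of $N$ gives local graphs of uniform size, elliptic (Schauder) estimates and Arzel\`a--Ascoli give local $C^2$ convergence of a subsequence, and a diagonal argument assembles the complete limit leaf through $p$. Your outline is correct; the only point worth tightening is that a $C^2_{loc}$ limit of embedded surfaces is embedded only after invoking the maximum principle to rule out tangential self-touching (two limit sheets that touch must coincide), which you essentially acknowledge in your closing paragraph.
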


For each $n \in \mathbb N$, let $\Sigma_n$ be the solution to the Plateau problem with boundary $P_n.$  By Theorem \ref{th-MMPR} and Proposition \ref{prop-unique}, $\Sigma_n$ is stable and unique. We are interested in proving the existence of a subsequence of $\Sigma_n$ that converges to a complete minimal surface with boundary $P_{\infty}$. In order to do that, we will use a minimal annulus as a barrier (whose existence is guaranteed by the Douglas criterion (see \cite{J},  Theorem 2.1)) to show that there exist points $p_n \in \Sigma_n, \Pi(p_n)=q \in \mbox{int} E$ for all $n,$ which converge to a point $p\in \mathbb R^2\rtimes_A \mathbb R,$ and then we will use Proposition \ref{prop-comp}.  

Consider the parallelepiped with the faces $A, B, C, D, E$ and $F$, defined below.
$$\begin{array}{rcl}
A &=& \{(u, -\epsilon, v): \epsilon \leq u \leq a+\epsilon; c_0 \leq v\leq c_1\}\\
&&\\
B &=& \{(-\epsilon,u, v): \epsilon \leq u \leq a+\epsilon; c_0 \leq v\leq c_1\}\\
&&\\
C &=& \{(u,-u, v): -\epsilon \leq u \leq \epsilon; c_0 \leq v\leq c_1\}\\
&&\\
D &=& \{(u,-u+a, v): -\epsilon \leq u \leq a+\epsilon; c_0 \leq v\leq c_1\}\\
&&\\
E &=& \{(u,-u+v, c_0): -\epsilon \leq u \leq v+\epsilon; 0 \leq v\leq a\}\\
&&\\
F &=& \{(u,-u+v, c_1): -\epsilon \leq u \leq v+\epsilon; 0 \leq v\leq a\},
\end{array}$$
where $\epsilon$ is a positive constant that we will choose later. Observe that each one of these faces is the least area minimal surface with its boundary. Let us analyse the area of each face.

1. In the plane $\{y = \mbox{constant}\}$ the induced metric is given by $g(x,z)=(a_{11}^2(z)+a_{21}^2(z))dx^2 +dz^2.$ Hence,
$$\begin{array}{rcl}
\mbox{area} \ A&=& \displaystyle\int_{c_0}^{c_1}\int_{\epsilon}^{a+\epsilon}\sqrt{a_{11}^2(z)+a_{21}^2(z)}{\rm d}x{\rm d}z \\
&&\\
&=& a\displaystyle \int_{c_0}^{c_1}\sqrt{a_{11}^2(z)+a_{21}^2(z)}{\rm d}z.\\
\end{array}$$

2. In the plane $\{x = \mbox{constant}\}$ the induced metric is given by $g(y,z)=(a_{11}^2(z)+a_{12}^2(z))dy^2 +dz^2.$ Hence,
$$\begin{array}{rcl}
\mbox{area} \ B&=&  \displaystyle\int_{c_0}^{c_1}\int_{\epsilon}^{a+\epsilon}\sqrt{a_{11}^2(z)+a_{12}^2(z)}{\rm d}x{\rm d}z \\
&&\\
&=& a\displaystyle \int_{c_0}^{c_1}\sqrt{a_{11}^2(z)+a_{12}^2(z)}{\rm d}z.\\
\end{array}$$

3. The face $C$ is contained in the plane parameterized by $\phi(u,v)=(u,-u,v)$ and the face $D$ is contained in the plane parameterized by $\psi(u,v)=(u,-u+a,v)$. We have $\psi_u=\phi_u=\partial_x -\partial_y, \psi_v=\phi_v=\partial_z.$ Then, $|\psi_u\wedge\psi_v|=|\phi_u\wedge\phi_v|=\sqrt{(a_{11}(z)+a_{12}(z))^2 + (a_{11}(z)+a_{21}(z))^2}.$ Hence,
$$\begin{array}{rcl}
\mbox{area} \ C&=&\displaystyle\int_{c_0}^{c_1}\int_{-\epsilon}^{+\epsilon}\sqrt{(a_{11}(z)+a_{12}(z))^2 + (a_{11}(z)+a_{21}(z))^2}{\rm d}u{\rm d}v\\
&&\\
& =& \displaystyle 2\epsilon \int_{c_0}^{c_1}\sqrt{(a_{11}(z)+a_{12}(z))^2 + (a_{11}(z)+a_{21}(z)^2}{\rm  d}z,\\
\end{array}$$
$$\begin{array}{rcl}
\mbox{area} \ D&=&\displaystyle\int_{c_0}^{c_1}\int_{-\epsilon}^{a+\epsilon}\sqrt{(a_{11}(z)+a_{12}(z))^2 + (a_{11}(z)+a_{21}(z))^2}{\rm d}u{\rm d}v\\
&&\\
&=& \displaystyle(a+2\epsilon) \int_{c_0}^{c_1}\sqrt{(a_{11}(z)+a_{12}(z))^2 + (a_{11}(z)+a_{21}(z))^2}{\rm d}z.
\end{array}$$

4. As the plane $\{z= \mbox{constant}\}$ is flat, then the induced metric is the Euclidean metric. Hence,
$$
\mbox{area} \ E=\mbox{area} \ F= \int_0^a\int_{-\epsilon }^{v+\epsilon}{\rm d}u{\rm d}v=\frac{a(a+4\epsilon)}{2}.
$$

Therefore,
$$\mbox{area} \ C + \mbox{area} \ D+ \mbox{area} \ E +\mbox{area}\ F<\mbox{area}\ A+\mbox{area}\ B$$
se, e somente se,
$$\begin{array}{rcl}(a+4\epsilon)\displaystyle\left[a+\int_{c_0}^{c_1}\sqrt{(a_{11}+a_{12})^2 + (a_{11}+a_{21})^2}{\rm d}z\right] &<&a\displaystyle\int_{c_0}^{c_1}\sqrt{a_{11}^2+a_{21}^2}{\rm d}z\\
&+&a\displaystyle\int_{c_0}^{c_1}\sqrt{a_{11}^2+a_{12}^2}{\rm d}z
\end{array}$$
se, e somente se,
\begin{equation}
\epsilon<\frac{a}{4}\frac{\displaystyle\int_{c_0}^{c_1}\sqrt{a_{11}^2(z)+a_{21}^2(z)}+\sqrt{a_{11}^2(z)+a_{21}^2(z)}{\rm d}z}{a+\displaystyle\int_{c_0}^{c_1}\sqrt{(a_{11}(z)+a_{12}(z))^2 + (a_{11}(z)+a_{21}(z))^2}{\rm d}z}-\frac{a}{4}.
\label{eqdefe}
\end{equation}

As we chose $a$ satisfying $(\ref{eqdefa}),$ the factor in the right hand side of (\ref{eqdefe}) is a positive number, then we can choose $\epsilon>0$ such that the Douglas criterion is satisfied \cite{J}. Hence we obtain a minimal annulus $ \mathcal A$ with boundary $\partial A\cup \partial B$ such that its projection $\Pi(\mathcal A )$ contains points of int$E,$ where $E$ is the disk in $\mathbb R\rtimes_A \{0\}$ with boundary $P.$ (See Figure \ref{fig-annulus}).

\begin{figure}[h]
  \centering
  \includegraphics[height=7cm]{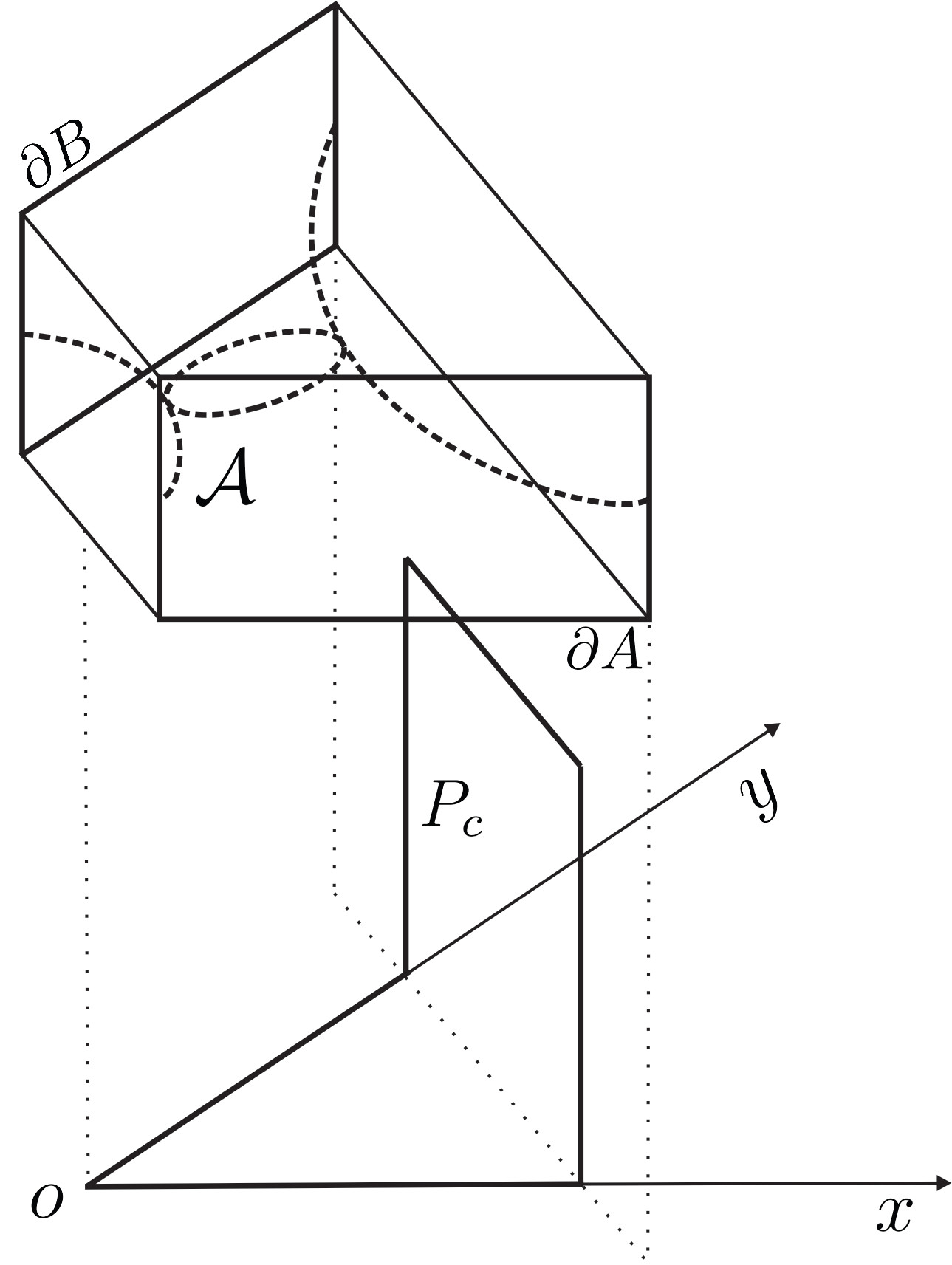}
\caption{Annulus $\mathcal A$.}
\label{fig-annulus}
\end{figure}

As $\mathbb R^2\rtimes_A \{z\}$ is a minimal surface, the maximum principle implies that, for each $c,$ $\Sigma_c$ is contained in the slab bounded by the planes $\{z=0\}$ and $\{z=c\}.$ Then for $c<c_0,$ $\Sigma_c \cap \mathcal A=\emptyset.$ As $\Sigma_c$ is unique, $\Sigma_c$ varies continuously with $c,$ and when $c$ increases the boundary $\partial\Sigma_c=P_c$ does not touch $\partial \mathcal A.$ Therefore, using the maximum principle, $\Sigma_c\cap\mathcal A=\emptyset$ for all $c,$ and $\Sigma_c$ is under the annulus $\mathcal A,$ which means that over any vertical line that intersects $\mathcal A$ and $\Sigma_c,$ the points of $\Sigma_c$ are under the points of $\mathcal A.$ 

Consider $\varphi_{t}$ the flow of the Killing field $X=\partial_x+\partial_y.$ Observe that $\{\varphi_t(\mathcal A)\}_{t<0}$ forms a barrier for all points $p_n\in\Sigma_n$ such that $\Pi(p_n)$ is contained in a neighborhood $\mathcal U \subset E$ of the origin $o=(0,0,0).$ Moreover, for any $c_2<c_3$ we can use the flow $\varphi_t$ of the Killing field $X$ and the maximum principle to conclude that $\Sigma_{c_2}$ is under $\Sigma_{c_3}$ in the same sense as before.

As, by Theorem $\ref{th-MMPR}$, each $\Sigma_n$ is a vertical graph in the interior, then $\Sigma_n\cap \Pi^{-1}(q)$ is only one point $p_n,$ for every point $q \in$ int$E.$ Moreover, by the previous paragraph, the sequence $p_n=\Sigma_n\cap \Pi^{-1}(q)$ is monotone. Then, since we have a barrier, the sequence $\{p_n=\Sigma_n\cap\Pi^{-1}(q)\}$ converges to a point $p\in \Pi^{-1}(q),$ for all $q\in \mathcal U.$

In order to understand the convergence of the surfaces $\Sigma_n$ we need to observe some properties of these surfaces.

First, notice that, rotation by angle $\pi$ around $\alpha_3,$ that we will denote by $R_{\alpha_3},$ is an isometry. By the Schwarz reflection, we obtain a minimal surface $\widetilde{\Sigma}_n=\Sigma_n\cup R_{\alpha_3}(\Sigma_n)$ that has int$\alpha_3$ in its interior. Note that the boundary of $\widetilde{\Sigma}_n$ is transverse to the Killing field $X=\partial_x+\partial_y,$ and if $\varphi_t$ denotes the flow of $X,$ we have that $\varphi_t(\partial\widetilde{\Sigma}_n)\cap \widetilde{\Sigma}_n=\emptyset$ for all $t\neq0,$  hence, using the same arguments of the proof of Proposition \ref{prop-unique}, we can show that the minimal surface $\widetilde{\Sigma}_n$ is the unique minimal surface with its boundary. In particular, it is area-minimizing, and then it is stable. Hence, by Main Theorem in \cite{RST}, we have uniform curvature estimates for points far from the boundary of $\widetilde{\Sigma}_n.$ In particular, we get uniform curvature estimates for $\Sigma_n$ in a neighborhood of $\alpha_3.$ Analogously, we have uniform curvature estimates for $\Sigma_n$ in a neighborhood of $\alpha_4.$ 

Hence, for every compact contained in $\{z>0\}\cap \mathcal R,$ there exists a subsequence of $\Sigma_n$ that converges to a minimal surface. Taking an exhaustion by compact sets and using a diagonal process, we conclude that there exists a subsequence of $\Sigma_n$ that converges to a minimal surface $\Sigma$ that has $\alpha_3\cup\alpha_4$ in its boundary. From now on, we will use the notation $\Sigma_n$ for this subsequence.

It remains to prove that in fact $\Sigma$ is a minimal surface with boundary $P_\infty.$ In order to do that, we will use the fact that the interior of each $\Sigma_n$ is a vertical graph over the interior of $E$. Let us denote by $u_n$ the function defined in int$E$ such that $\Sigma_n=\mbox{Graph}(u_n).$ We already know that $u_{n-1}< u_n$ in int$E$ for all $n.$ 

\begin{claim}
There are uniform gradient estimates for $\{u_n\}$ for points in $\alpha_1^0\cup\alpha_2^0.$
\label{claim1}
\end{claim}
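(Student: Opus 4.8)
The plan is to establish the gradient estimate by a barrier argument: I will sandwich each graph $\Sigma_n = \operatorname{Graph}(u_n)$, near the edges $\alpha_1^0$ and $\alpha_2^0$, between two pieces of explicit minimal surfaces whose boundary behaviour along the segments $\alpha_1 = \{(t,0,0):0\le t\le a\}$ and $\alpha_2 = \{(0,t,0):0\le t\le a\}$ is controlled. The lower barrier is the flat piece $\mathbb R^2\rtimes_A\{0\}$ itself (the plane $\{z=0\}$), which is minimal by Remark \ref{rem1} since $\operatorname{tr}(A)=0$, and which all $\Sigma_n$ lie above by the maximum principle; so $u_n \ge 0$, with $u_n = 0$ exactly on $\alpha_1^0\cup\alpha_2^0$. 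The upper barrier will be built from vertical minimal planes. Recall from the second Remark that for any line $L\subset \mathbb R^2\rtimes_A\{0\}$ the vertical plane $P_L = \Pi^{-1}(L)$ is minimal. Along $\alpha_3 = \{(a,0,t):t>0\}$ and $\alpha_4=\{(0,a,t):t>0\}$ the graph $\Sigma_n$ is bounded, but near $\alpha_1^0$ and $\alpha_2^0$ I want to bound the height $u_n$ linearly in terms of the distance in $E$ to the respective edge.

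Concretely, near the edge $\alpha_1^0$ (the $x$-axis segment, where $u_n=0$), I will take the family of vertical minimal planes $\Pi^{-1}(L_\lambda)$ where $L_\lambda$ is the line $\{y = \lambda\}$, slightly tilted or translated copies of the plane $\{y=0\}$, together with a suitably large multiple of a fixed vertical minimal plane transverse to $\{y=0\}$ to cap off the region; by choosing the slope large one forces $\Sigma_n$ to lie underneath on a fixed neighborhood of $\alpha_1^0$ in $E$, uniformly in $n$ (the key point is that $P_n$ is disjoint from these barriers because $\alpha_3^n,\alpha_4^n$ sit over the vertices $(a,0)$ and $(0,a)$, which are a definite distance from the $x$-axis segment, and $\alpha_5^n$ is at height $n$, far away). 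This gives $0\le u_n(x,y)\le C\, y$ for $(x,y)$ in a one-sided neighborhood of $\alpha_1^0$, with $C$ independent of $n$, and symmetrically $0\le u_n(x,y)\le C\, x$ near $\alpha_2^0$. Combining the lower bound $u_n\ge 0$, the boundary value $u_n=0$ on the edge, and the linear upper barrier, together with interior and boundary curvature estimates for the stable surfaces $\widetilde\Sigma_n$ (as already invoked in the text near $\alpha_3,\alpha_4$, and available near $\alpha_1^0\cup\alpha_2^0$ using Proposition \ref{iso}: rotation by $\pi$ about the horizontal geodesics $\alpha_1,\alpha_2$ is an isometry, so the Schwarz-reflected surfaces $\Sigma_n\cup R_{\alpha_1}(\Sigma_n)$ and $\Sigma_n\cup R_{\alpha_2}(\Sigma_n)$ are minimal and, by the same uniqueness/area-minimization argument as for $\widetilde\Sigma_n$, stable), one obtains a uniform bound on $|A_{\Sigma_n}|$ hence on $|\nabla u_n|$ in a neighborhood of $\alpha_1^0\cup\alpha_2^0$.

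The main obstacle will be constructing the upper barrier with control that is genuinely uniform in $n$ and that degenerates in the right way as one approaches $\alpha_1^0\cup\alpha_2^0$: one must check that the chosen vertical minimal planes, which are minimal for \emph{every} $A$ of the prescribed form, can be arranged disjoint from all the polygons $P_n$ simultaneously, and that near the two vertices $(a,0,0)$ and $(0,a,0)$ — where $\alpha_1$ meets $\alpha_3^n$ and $\alpha_2$ meets $\alpha_4^n$ — the barrier still produces a finite slope bound (this is the delicate corner, since there the boundary $P_n$ itself has a corner between a horizontal edge and a vertical edge). I expect to handle the corners by using, as an upper barrier there, a scaled piece of the minimal annulus $\mathcal A$ (or a translate $\varphi_t(\mathcal A)$) already constructed above, whose existence near $\operatorname{int}E$ is exactly what was arranged via the Douglas criterion; away from the corners the tilted vertical planes suffice. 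Once $|\nabla u_n|$ is bounded, a standard elliptic bootstrap gives $C^{1,\alpha}$ and higher estimates, completing the proof of the claim.
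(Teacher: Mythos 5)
Your overall strategy --- trap $\Sigma_n$ between the minimal plane $\{z=0\}$ from below and a minimal upper barrier that vanishes along the edge with controlled slope, then read off $0\le \partial_\nu u_n\le C$ --- is exactly the right one and matches the paper's. The genuine gap is in the construction of the upper barrier. The objects you propose to use do not exist in these spaces: a ``slightly tilted'' copy of the vertical plane $\{y=0\}$ is not a minimal surface in a general $\mathbb R^2\rtimes_A\mathbb R$ (the only planes known to be minimal here are the vertical planes $\Pi^{-1}(L)$ and, since $\mbox{tr}(A)=0$, the horizontal leaves $\mathbb R^2\rtimes_A\{z\}$; there is no family of sloped minimal planes as in $\mathbb R^3$, and no reflection or rotation fixing a tilted plane that would force its minimality). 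A genuinely vertical plane $\Pi^{-1}(L_\lambda)$ has no finite ``slope'' as a graph over the base, so it cannot yield the bound $u_n\le Cy$; and a ``suitably large multiple'' or ``scaled piece'' of a minimal surface (including of the annulus $\mathcal A$) is not available either, since these metrics admit no homotheties. So the estimate $0\le u_n\le Cy$, which is the entire content of the claim, is asserted but not actually produced by your construction.

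The paper fills precisely this gap by \emph{manufacturing} the sloped barrier: it takes a flat quadrilateral strip contained in a vertical minimal plane $\{x=x_0\}$, $x_0<0$, with two slanted sides running from height $c_1$ down to height $0$ and a bottom edge $\beta_4$ lying in $\{z=0\}$. Because this strip is transversal to the Killing field $\partial_x$, a small perturbation of one boundary arc into $\{x\ge x_0\}$ still bounds a minimal surface $S$; the boundary maximum principle applied to $S$ against the vertical plane $\{x=x_0\}$ then forces the tangent planes of $S$ along $\mbox{int}\,\beta_4$ to be non-vertical, i.e.\ $S$ is a vertical graph of bounded slope vanishing along $\beta_4$. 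Translating $S$ along the $x$- and $y$-axes and sweeping it over $\Sigma_n$ gives $\Sigma_n\le S$ near $\alpha_2^0$ uniformly in $n$, hence the gradient bound; the edge $\alpha_1^0$ is handled symmetrically. Your secondary remarks are also off in places: curvature estimates via Schwarz reflection across $\alpha_1,\alpha_2$ (using Proposition \ref{iso}) would control $|A_{\Sigma_n}|$ but not by themselves prevent the tangent planes from turning vertical in the limit, so they cannot replace the barrier; and no special treatment of the vertices $(a,0,0)$, $(0,a,0)$ is needed since $\alpha_1^0\cup\alpha_2^0$ excludes them. To repair your proof you must replace the ``tilted plane'' step with an existence argument for a sloped minimal graph vanishing on the edge --- e.g.\ the perturbation-of-a-transversal-strip device above.
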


\begin{proof} For $x_0<0$ and $\delta>0$ consider the vertical strip bounded by $\beta_1=\{(x_0,y, c_1): -\delta\leq y\leq 0\},$ $\beta_2=\{(x_0, t,-\frac{c_1}{a}t+c_1): 0\leq t\leq a\},$ $\beta_3=\{(x_0,t-\delta, -\frac{c_1}{a}t+c_1): 0\leq t\leq a\}$ and $\beta_4=\{(x_0,y,0): a-\delta\leq y\leq a\}.$ This is a minimal surface transversal to the Killing field $\partial_x,$ hence any small perturbation of its boundary gives a minimal surface with that perturbed boundary. Thus, if we consider a small perturbation of the boundary of this vertical strip by just perturbing a little bit $\beta_1$ by a curve contained in $\{x\geq x_0\}$ joining the points $(x_0, -\delta, c_1)$ and $(x_0, 0, c_1),$ we will get a minimal surface $S$ with this perturbed boundary. This minimal surface $S$ will have the property that the tangent planes at the interior of $\beta_4$ are not vertical, by the maximum principle with boundary.

Applying translations along the $x$-axis and $y$-axis, we can use the translates of $S$ to show that $\Sigma_n$ is under $S$ in a neighborhood of $\alpha_2^0,$ and then we have uniform gradient estimates for points in $\alpha_2^0.$ Analogously, constructing similar barriers, we can prove that we have uniform gradient estimates in a neighborhood of $\alpha_1^0.$ 
\end{proof}

Observe that besides the gradient estimates, the translates of the minimal surface $S$ form a barrier for points in a neighborhood of $\alpha_1^0\cup \alpha_2^0.$

We have that $\Sigma_n$ is a monotone increasing sequence of minimal graphs with uniform gradient estimates in $\alpha_1^0\cup \alpha_2^0,$ and it is a bounded graph for points in a neighborhood $\mathcal U$ of the origin (because of the barrier given by the annulus $\mathcal A$). Therefore, there exists a subsequence of $\Sigma_n$ that converges to a minimal surface ${\widetilde \Sigma}$ with $\alpha_1^0\cup \alpha_2^0$ in its boundary. As we already know that $\Sigma_n$ converges to the minimal surface $\Sigma,$ we conclude that in fact $\Sigma=\widetilde{\Sigma},$ and then $\Sigma$ is a minimal surface with $\alpha_1^0\cup\alpha_2^0\cup\alpha_3\cup\alpha_4$ in its boundary. Notice that we can assume that $\Sigma$ has $P_\infty$ as its boundary, with $\Sigma$ being of class $C^1$ up to $P_\infty\setminus \{(a,0,0),(0,a,0)\}$ and continuous up to $P_\infty.$ 

Now considering the rotation by angle $\pi$ around $\alpha_1$ of $\Sigma,$ we obtain the surface illustrated in Figure \ref{fig-reflected}.
\begin{figure}[h]
  \centering
  \includegraphics[height=6cm]{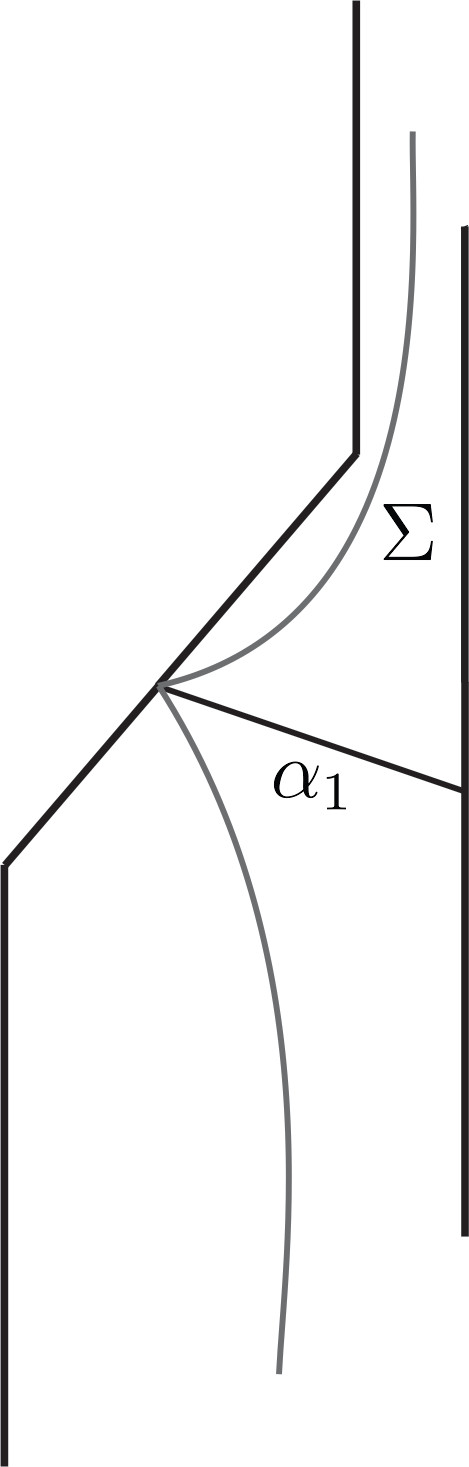}
  \caption{Rotation by angle $\pi$ around $\alpha_1$ of $\Sigma.$}
\label{fig-reflected}
\end{figure}

Continuing to rotate by angle $\pi$ around the $y$-axis, the resulting surface will be a minimal surface with four vertical lines as its boundary: $\{(a,0,t): t\in \mathbb R\}, \{(0,a,t): t\in \mathbb R\}, \{(-a,0,t): t\in \mathbb R\}, \{(0,-a,t): t\in \mathbb R\}.$ 

Now we can use the rotations by angle $\pi$ around the vertical lines to get a complete minimal surface that is analogous to the doubly periodic minimal Scherk surface in $\mathbb R^3.$ It is invariant by two translations that commute and it is a four punctured sphere in the quotient of $\mathbb R^2\rtimes_A \mathbb R$ by the group of isometries generated by the two translations.

\begin{theorem}
In any semidirect product $\mathbb R^2\rtimes_A \mathbb R,$ where $A=\left(
\begin{array}{ccc}
	0 & b\\
	c&0
\end{array}\right),$ there exists a periodic minimal surface similar to the doubly periodic Scherk minimal surface in $\mathbb R^3.$
\end{theorem}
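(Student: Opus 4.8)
The plan is to organize the construction developed above into a proof. First I would fix $0<c_0<c_1$ and choose $a>0$ small enough that (\ref{eqdefa}) holds, and for each $c>0$ take $\Sigma_c$ to be the least-area Plateau disk with boundary $P_c$ furnished by Theorem \ref{th-MMPR}; by Proposition \ref{prop-unique} this is the unique compact minimal surface spanning $P_c$, and its interior is a vertical graph $\operatorname{Graph}(u_c)$ over $\operatorname{int}E$. Comparing with the flat minimal slices $\mathbb R^2\rtimes_A\{z\}$ confines $\Sigma_c$ to the slab $\{0\le z\le c\}$; the family is nested, varies continuously in $c$, and one works with the integer subfamily $\Sigma_n$.

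The central — and, I expect, hardest — step is to keep the limit of the $\Sigma_n$ from degenerating, i.e.\ from collapsing onto $\partial\mathcal R$ or running off to infinity. For this I would fix $\epsilon>0$ satisfying (\ref{eqdefe}), which is possible precisely because $a$ was chosen via (\ref{eqdefa}); then the Douglas criterion (\cite{J}, Theorem 2.1) applies to the boundary curve $\partial A\cup\partial B$ of the parallelepiped and yields a minimal annulus $\mathcal A$ with $\Pi(\mathcal A)$ meeting $\operatorname{int}E$. A maximum-principle argument, using that $\partial\Sigma_c$ never touches $\partial\mathcal A$ and the monotonicity/continuity in $c$, shows $\Sigma_c\cap\mathcal A=\emptyset$ with $\Sigma_c$ below $\mathcal A$; pushing $\mathcal A$ by the flow $\varphi_t$, $t<0$, of the Killing field $X=\partial_x+\partial_y$ produces a barrier over a neighborhood $\mathcal U$ of the origin. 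Hence for each $q\in\mathcal U$ the monotone sequence $p_n=\Sigma_n\cap\Pi^{-1}(q)$ is bounded and converges to some $p\in\Pi^{-1}(q)$, so the limit will pass through a fixed compact region.

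Next I would extract the limit surface. Reflecting $\Sigma_n$ (Schwarz reflection) across the vertical segments $\alpha_3^n,\alpha_4^n$ — rotations by $\pi$ about vertical geodesics being isometries — gives minimal surfaces $\widetilde\Sigma_n$ that are again unique with their boundary by the argument of Proposition \ref{prop-unique}, hence area-minimizing and stable, so the curvature estimates for stable minimal surfaces in homogeneous $3$-manifolds (Main Theorem of \cite{RST}) give uniform bounds for $\Sigma_n$ near $\alpha_3\cup\alpha_4$. An exhaustion of $\{z>0\}\cap\mathcal R$ by compacta and a diagonal argument, together with Proposition \ref{prop-comp}, produce a subsequence converging to a minimal surface $\Sigma$ containing $\alpha_3\cup\alpha_4$ in its boundary. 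Claim \ref{claim1} supplies uniform gradient estimates for $u_n$ along $\alpha_1^0\cup\alpha_2^0$ via the perturbed vertical-strip barriers $S$; since the $\Sigma_n$ are monotone increasing graphs bounded near $\mathcal U$, a subsequence also converges over that region, and uniqueness of the limit forces $\Sigma$ to have boundary $P_\infty$, being $C^1$ up to $P_\infty\setminus\{(a,0,0),(0,a,0)\}$ and continuous up to $P_\infty$.

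Finally I would complete $\Sigma$ by symmetry. Rotating $\Sigma$ by angle $\pi$ about the horizontal geodesic $\alpha_1$ — an isometry by Proposition \ref{iso} — and then about the $y$-axis yields a minimal surface bounded by the four vertical lines through $(\pm a,0,0)$ and $(0,\pm a,0)$; rotating by $\pi$ about each of these vertical geodesics (again isometries) gives a complete, properly embedded minimal surface invariant under the two commuting translations generated by the successive reflections. In the quotient of $\mathbb R^2\rtimes_A\mathbb R$ by the group generated by these two translations it is a four-punctured sphere, hence the sought analogue of the doubly periodic Scherk surface, which proves the theorem.
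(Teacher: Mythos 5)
Your proposal follows the paper's own proof essentially step for step: the choice of $a$ via (\ref{eqdefa}) and $\epsilon$ via (\ref{eqdefe}), the Douglas-criterion annulus as a barrier, uniqueness and monotonicity of the Plateau solutions $\Sigma_n$, curvature estimates near $\alpha_3\cup\alpha_4$ via Schwarz reflection and stability, the gradient estimates of Claim \ref{claim1} along $\alpha_1^0\cup\alpha_2^0$, and the final completion by successive rotations by $\pi$ about horizontal and vertical geodesics. The argument is correct and is the same as the one in the paper.
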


\section{A singly periodic Scherk minimal surface}

Throughout this section, we consider the semidirect product $\mathbb R^2\rtimes_A \mathbb R$ with the canonical left invariant metric $\left\langle, \right\rangle,$ where $A=\left(\begin{array}{cc} 0&b\\c&0\end{array}\right).$ In this space, we construct a complete minimal surface similar to the singly periodic Scherk minimal surface in $\mathbb R^3$.

Fix $c_0>0$ and take $0<\epsilon < a$ sufficiently smalls so that
$$
a+2\epsilon < \int_{0}^{c_0}{\sqrt{a_{11}^2(z)+a_{21}^2(z)}}dz.
$$

For each $c>0,$ consider the polygon $P_c$ in $\mathbb {R}^2 \rtimes_A \mathbb{R}$ with the six sides defined below.
$$
\begin{array}{rcl}
\alpha_1^c &=& \left\{(t,0,0):0\leq t \leq c\right\}\\
&&\\
\alpha_2^c&=&\left\{(c,t,0): 0\leq t \leq a\right\}\\
&&\\
\alpha_3^c&=&\{(t,a,0): 0\leq t \leq c\} \\
&&\\
\alpha_4^c&=&\{(0,a,t): 0\leq t \leq c\}\\
&&\\
\alpha_5^c&=&\{(0,t,c):  0\leq t \leq a\}\\
&&\\
\alpha_6^c&=&\{(0,0,t): 0\leq t\leq c \},
\end{array}$$
and for each $\delta>0$ with $\delta<a/2,$ consider the polygon $P_c^{\delta}$ with the following six sides.
$$\begin{array}{rcl}
\alpha_1^{ \delta, c} &=& \left\{(t,\frac{\delta}{c}t,0):0\leq t \leq c\right\}\\
&&\\
\alpha_2^{ \delta,c}&=&\{(c,t,0): \delta \leq t \leq a-\delta\}\\
&&\\
\alpha_3^{\delta,c}&=&\left\{(t,\frac{ac-\delta t}{c},0):  0\leq t \leq c\right\},
\end{array}$$
$\alpha_4^c, \alpha_5^c, \alpha_6^c,$  as illustrated in Figure \ref{fig4}.

\begin{figure}[h!]
  \centering
  \includegraphics[width=11.5cm]{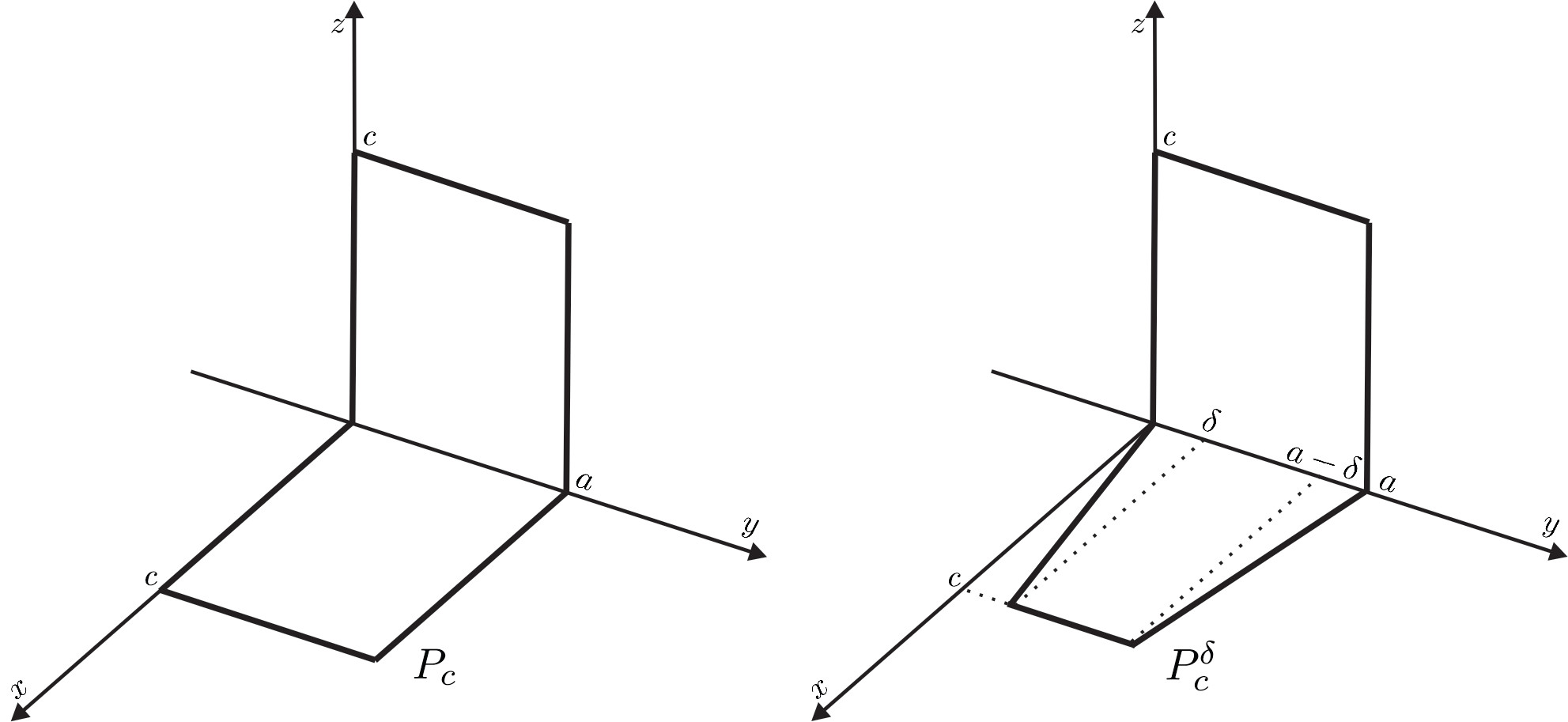}
\caption{Polygons $P_c$ and $P_c^{\delta}.$}
\label{fig4}
\end{figure} 

Denote by $\Omega(\delta, c)$ the region in $\mathbb R^2\rtimes_A \{0\}$ bounded by $\alpha_1^{ \delta,c}, \alpha_2^{ \delta,c}, \alpha_3^{ \delta,c}$ and the segment $\{(0,t,0): 0\leq t\leq a\}.$ For each $c$ and $\delta,$ we have compact minimal surfaces $\Sigma_c$ and $\Sigma_c^{\delta}$ with boundary $P_c$ and $P_c^{\delta},$ respectively, which are solutions to the Plateau problem. By Theorem \ref{th-MMPR}, we know that $\Sigma_c$ and $\Sigma_c^{\delta}$ are stable and smooth $\Pi$-graphs over the interior of $\Omega(0,c), \Omega(\delta,c),$ respectively. We will show that $\Sigma_c$ is the unique compact minimal surface with boundary $P_c.$

Fix $c.$ For each $0< \delta <a/2,$ $P_c^{\delta}$ is a polygon transverse to the Killing field $\partial_x$ and each integral curve of $\partial_x$ intersects $P_c^{\delta}$ in at most one point. Thus we can prove, as we did in Proposition \ref{prop-unique}, that $\Sigma_c^{\delta}$ is the unique compact minimal surface with boundary $P_c^{\delta}.$

Denote by $u_c^{\delta}, v_c$ the functions defined in the interior of $\Omega(\delta,c), \Omega(0,c),$ whose $\Pi$-graphs are $\Sigma_c^{\delta}, \Sigma_c,$ respectively. Then, as $\partial_x$ is a Killing field and each $P_c^{\delta}$ is transversal to $\partial_x,$ we can use the flow of $\partial_x$ and the maximum principle to prove that for $\delta'<\delta$ we have $0\leq u_c^{\delta} \leq u_c^{\delta'} \leq v_c$ in int$\Omega(\delta,c),$ hence $v_c$ is a barrier for our sequence $u_c^{\delta}.$ Because of the monotonicity and the barrier, the family $u_c^{\delta}$ converges to a function $u_c$ defined in int$\Omega(0,c)$ whose graph is a compact minimal surface with boundary $P_c,$ and we still have $u_c\leq v_c$ on $\Omega(0,c).$

Now we will find another compact minimal surface with boundary $P_c,$ whose interior is the graph of a function $w_c$ defined in int$\Omega(0,c)$ such that $v_c\leq w_c$ and we will show that $u_c=w_c.$ %Finally, we will conclude that $\Sigma_c$ is stable because it is the limit of a sequence of stable surfaces.
 In order to do that, for each $0<\delta <a/2,$ consider the polygon $\widetilde{P}_c^{\delta}$ with the six sides defined below. (See Figure \ref{tilda}).
$$\begin{array}{rcl}
\widetilde{\alpha}_1^{\delta,c}&=&\{(t,\frac{\delta t -\delta c}{c},0): 0\leq t\leq c\}\\
&&\\
\alpha_2^c&=&\{(c,t,0): 0\leq t\leq a\}\\
&&\\
\widetilde{\alpha}_3^{\delta,c}&=&\{(t,\frac{(a+\delta)c-\delta t}{c},0): 0\leq t\leq c\}\\
&&\\
\widetilde{\alpha}_4^{\delta,c}&=&\{(0,a+\delta,t): 0\leq t\leq c\}\\
&&\\
\widetilde{\alpha}_5^{\delta,c}&=&\{(0,t,c):-\delta\leq t \leq a+\delta\}\\
&&\\
\widetilde{\alpha}_6^{\delta,c}&=&\{(0,-\delta, t): 0\leq t\leq c\}.
\end{array}$$

\begin{figure}[h!]
  \centering
  \includegraphics[width=11.5cm]{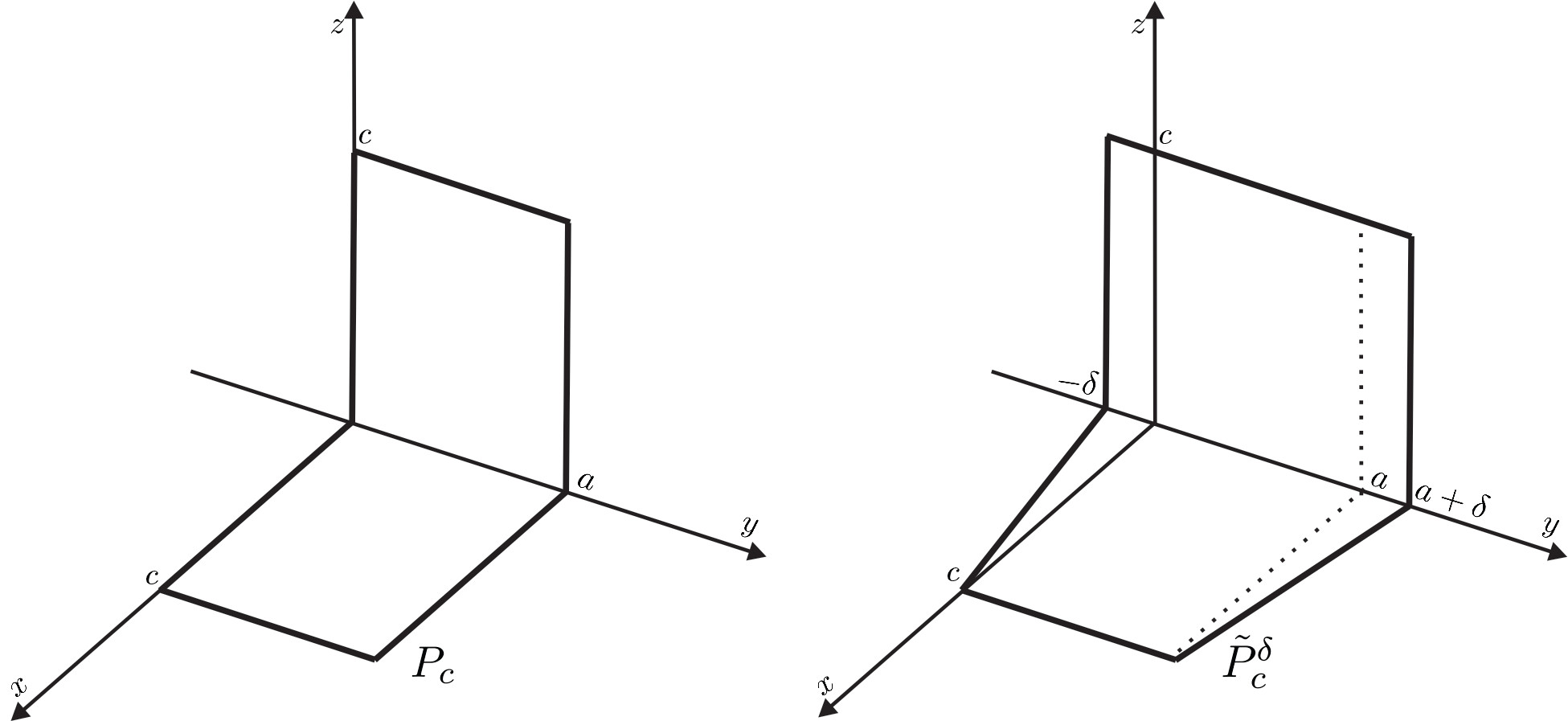}
\caption{Polygons $P_c$ and $\widetilde{P}_c^{\delta}.$}
\label{tilda}
\end{figure}

Denote by $\widetilde{\Omega}(\delta,c)$ the region in $\mathbb R^2\rtimes_A \{0\}$ bounded by $\widetilde{\alpha}_1^{\delta,c},$ \ $\alpha_2^c,\  \widetilde{\alpha}_3^{\delta,c}$ and the segment $\{(0,t,0): -\delta\leq t \leq a+\delta\}.$ For each $\delta,$ we have a compact minimal disk $\widetilde{\Sigma}_c^{\delta}$ with boundary $\widetilde{P}_c^{\delta}$ and $\widetilde{\Sigma}_c^{\delta}$ is a smooth $\Pi$-graph over the interior of $\widetilde{\Omega}(\delta,c).$ As $\widetilde{P}_c^{\delta}$ is transversal to the Killing field $\partial_x,$ we can prove that $\widetilde{\Sigma}_c^{\delta}$ is the unique compact minimal surface with boundary $\widetilde{P}_c^{\delta}.$ 

Denote by $w_c^{\delta}$ the function defined in int$\widetilde{\Omega}(\delta,c)$ whose graph is $\widetilde{\Sigma}_c^{\delta}.$ Using the flow of $\partial_x$ and the maximum principle, we can prove that for $\delta'<\delta$ we have $w_c^{\delta'}\leq w_c^{\delta}$ in int$\widetilde{\Omega}(\delta',c)$ and for all $\delta,$ $v_c\leq w_c^{\delta}$ in int$\Omega(0,c).$ Because of the monotonicity and the barrier, the family $w_c^{\delta}$ converges to a function $w_c$ defined in int$\widetilde{\Omega}(0,c)=\mbox{int}\Omega(0,c)$ whose graph is a compact minimal surface with boundary $P_c,$ and we still have $v_c\leq w_c$ in int$\Omega(0,c).$

Let us call $\Sigma_1, \Sigma_2$ the graphs of $u_c,w_c,$ respectively. We will now prove that $\Sigma_1=\Sigma_2.$ Denote by $\nu_i$ the conormal to $\Sigma_i$ along $P_c,$ $i=1,2.$ (See Figure \ref{conormal}).

\begin{figure}[h!]
  \centering
  \includegraphics[height=5.8cm]{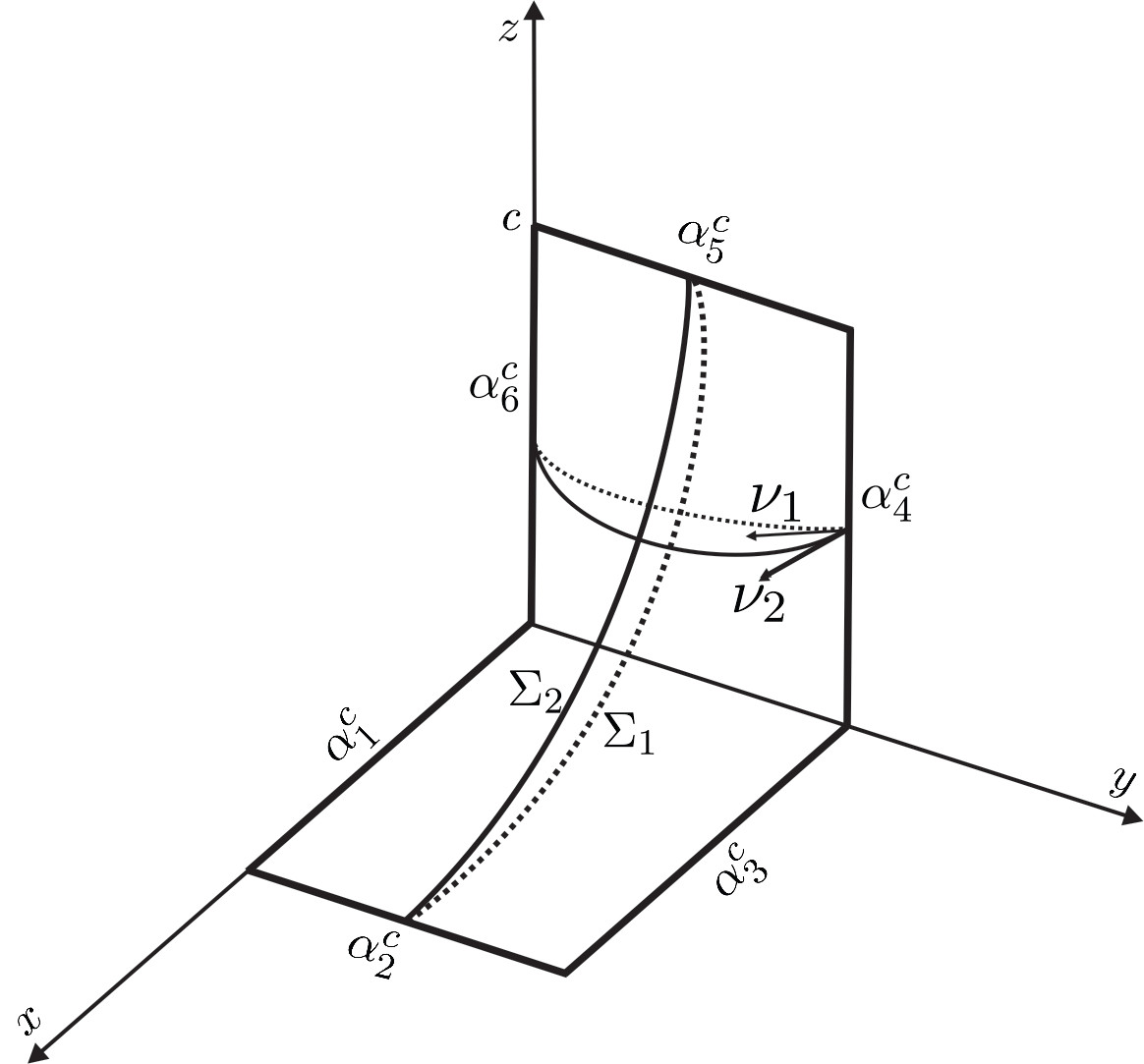}
\caption{$\Sigma_1$ and $\Sigma_2.$}
\label{conormal}
\end{figure}

Suppose that $u_c\neq w_c,$ then in fact we have $u_c<w_c$ in int$\Omega(0,c).$ As $\partial_x$ is tangent to $\alpha_1^c$ and $\alpha_3^c,$ then $\left\langle \nu_i, \partial_x\right\rangle=0,$ $i=1,2,$ in $\alpha_1^c$ and $\alpha_3^c.$ In the other sides of $P_c$ we have $\left\langle \nu_1, \partial_x\right\rangle <\left\langle \nu_2, \partial_x\right\rangle.$ Therefore,
$$
\int_{P_c}\left\langle \nu_1, \partial_x\right\rangle < \int_{P_c}\left\langle \nu_2, \partial_x\right\rangle.
$$
But, using the Flux Formula for $\Sigma_1$ and $\Sigma_2$ with respect to the Killing field $\partial_x,$ we have
$$
\int_{P_c}\left\langle \nu_1, \partial_x\right\rangle =0= \int_{P_c}\left\langle \nu_2, \partial_x\right\rangle.
$$
Then, $u_c=w_c$ and therefore, $\Sigma_c=\Sigma_1=\Sigma_2.$ In particular, $\Sigma_c$ is the unique compact minimal surface with boundary $P_c.$

Denote by $\Omega(\infty)$ the infinite strip $\{(x,y,0): x\geq 0, 0\leq y\leq a\},$ and by $\mathcal R$ the region $\{(x,y,z):x\geq0, 0\leq y\leq a, z\geq 0\}.$ Moreover, denote $\alpha_1=\{(x,0,0):x>0\},$ $\alpha_3=\{(x,a,0):x>0\},$ $\alpha_4=\{(0,a,z):z>0\}$ and $\alpha_6=\{(0,0,z):z>0\},$ hence $P_{\infty}=\alpha_1\cup\alpha_3\cup\alpha_4\cup\alpha_6\cup\{(0,0,0),(0,a,0)\}.$ 

For each $n\in \mathbb N,$ let $\Sigma_n$ be the unique compact minimal surface with boundary $P_n.$ We are interested in proving the existence of a subsequence of $\Sigma_n$ that converges to a complete minimal surface with boundary $P_{\infty}$. Using the existence of a minimal annulus, guaranteed by the Douglas criterion, we will show that there exist points $p_n\in \Sigma_n,\ \Pi(p_n)=q \in \mbox{int}\ \Omega(\infty)$ for all $n,$ which converge to a point $p\in \mathbb R^2\rtimes_A \mathbb R,$ and then we will use Proposition \ref{prop-comp}.

Consider the parallelepiped with faces $A, B, C, D, E$ and $F,$ defined below.
$$\begin{array}{rcl}
A &=& \{(u, -\epsilon, v): \epsilon \leq u \leq d; 0 \leq v\leq c_0\}\\
&&\\
B &=& \{(u, a+\epsilon, v): \epsilon \leq u \leq d; 0 \leq v\leq c_0\}\\
&&\\
C &=& \{(u,v, 0): \epsilon \leq u \leq d; -\epsilon \leq v\leq a+\epsilon \}\\
&&\\
D &=& \{(u,v, c_0): \epsilon \leq u \leq d; -\epsilon \leq v\leq a+\epsilon\}\\
&&\\
E &=& \{(\epsilon,u,v): -\epsilon \leq u \leq a+\epsilon; 0 \leq v\leq c_0\}\\
&&\\
F &=& \{(d,u, v): -\epsilon \leq u \leq a+\epsilon; 0 \leq v\leq c_0\},
\end{array}$$
where $d>\epsilon$ is a constant that we will choose later.

As we did in the last section, we can calculate the area of each one of these faces and we obtain:
$$\begin{array}{rcl}
\mbox{area}\ A = \mbox{area}\ B&=& (d-\epsilon)\displaystyle\int_0^{c_0}\sqrt{a_{11}^2(z)+a_{21}^2(z)}{\rm d}z, \\
&&\\
\mbox{area}\ C=\mbox{area}\ D&=&(d-\epsilon)(a+2\epsilon),\\
&&\\
\mbox{area}\ E=\mbox{area}\ F&=&(a+2\epsilon)\displaystyle\int_0^{c_0}\sqrt{a_{11}^2(z)+a_{12}^2(z)}{\rm d}z.
\end{array}$$

Hence,
$$
\mbox{area}\ C + \mbox{area}\ D + \mbox{area}\ E+\mbox{area}\ F< \mbox{area}\ A+\mbox{area}\ B
$$
se, e somente se,
$$(d-\epsilon)(a+2\epsilon) + (a+2\epsilon)\displaystyle\int_0^{c_0}\sqrt{a_{11}^2+a_{12}^2}{\rm d}z < (d-\epsilon)\displaystyle\int_0^{c_0}\sqrt{a_{11}^2+a_{21}^2}{\rm d}z$$
se, e somente se,
$$
(d-\epsilon)\displaystyle \left[(a+2\epsilon)- \int_0^{c_0}\sqrt{a_{11}^2+a_{21}^2}{\rm d}z\right]< - (a+2\epsilon)\int_0^{c_0}\sqrt{a_{11}^2+a_{12}^2}{\rm d}z
$$
se, e somente se,
$$
d> \epsilon - \frac{(a+2\epsilon)\displaystyle\int_0^{c_0}\sqrt{a_{11}^2(z)+a_{12}^2(z)}{\rm d}z}{(a+2\epsilon)- \displaystyle\int_0^{c_0}\sqrt{a_{11}^2(z)+a_{21}^2(z)}{\rm d}z}.
$$

As we chose $a+2\epsilon < \displaystyle\int_0^{c_0}\sqrt{a_{11}^2(z)+a_{21}^2(z)}{\rm d}z,$ we can choose $d>\epsilon$ so  that the Douglas criterion is satisfied \cite{J}. Thus, there exists a minimal annulus $\mathcal A$ with boundary $\partial A \cup \partial B$ such that its projection $\Pi(\mathcal A)$ contains points of int$\Omega(\infty).$ (See Figure \ref{fig-annulus2}).

\begin{figure}[h!]
  \centering
  \includegraphics[height=6cm]{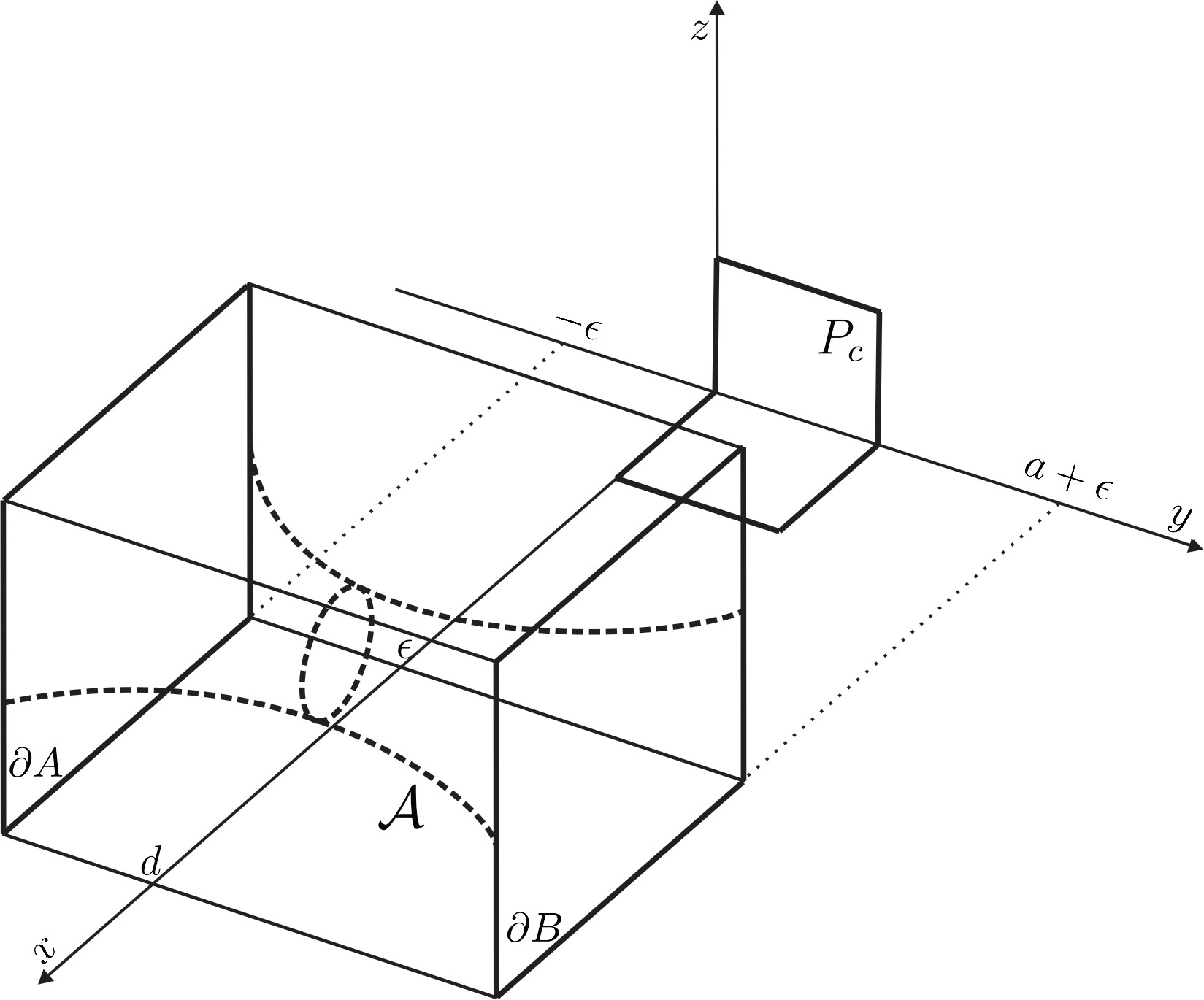}
  \caption{Annulus $\mathcal A$.}
  \label{fig-annulus2}
\end{figure}

We know that, for each $c<\epsilon,$ $\Sigma_c\cap\mathcal A=\emptyset.$ When $c$ increases $P_c$ does not intersect $\partial \mathcal A,$ then, using the maximum principle, $\Sigma_c\cap\mathcal A=\emptyset$ for all $c,$ and $\Sigma_c$ is under the annulus $\mathcal A.$ Thus, there exists a point $q\in \mbox{int} \Omega(\infty)$ such that $p_n=\Sigma_n\cap\Pi^{-1}(q)$ has a subsequence that converges to a point $p\in \Pi^{-1}(q).$ Observe that applying the flow of the Killing field $\partial_x$ to the annulus $\mathcal A$ we can conclude that, in the region $\{x\geq d\},$ the surfaces $\Sigma_n$ are bounded above by, for example, the plane $\{z=c_0\}.$ 

In order to understand the convergence of the surfaces $\Sigma_n$ we need to prove some properties of these surfaces.

\begin{claim}
The surfaces $\Sigma_n$ are transversal to the Killing field $\partial_x$ in the interior. 
\end{claim}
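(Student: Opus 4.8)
The plan is to argue that each $\Sigma_n$, being a $\Pi$-graph over $\mathrm{int}\,\Omega(0,n)$, is transverse to the vertical direction in its interior, and then to upgrade this to transversality to the \emph{horizontal} Killing field $\partial_x$ by a reflection-plus-uniqueness argument analogous to the ones already used for $\Sigma_c^\delta$ and $\widetilde\Sigma_c^\delta$. More precisely, first I would observe that the four sides $\alpha_2^n,\alpha_4^n,\alpha_5^n,\alpha_6^n$ of $P_n$ lie in the plane $\{x=0\}$ or the plane $\{x=n\}$, i.e. each of these sides is everywhere transverse or tangent to $\partial_x$, while the two sides $\alpha_1^n$ and $\alpha_3^n$ lie in $\{z=0\}$ and are integral curves of $\partial_x$. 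Thus $P_n$ is transverse to $\partial_x$ except along $\alpha_1^n\cup\alpha_3^n$, where $\partial_x$ is tangent to $P_n$ but \emph{not} tangent to the graph $\Sigma_n$ (its interior is a $z$-graph, so near a boundary point of $\alpha_1^n$ or $\alpha_3^n$ the tangent plane of $\Sigma_n$ is not vertical unless the graph function has infinite gradient there).

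The key step is the reflection argument. Rotation by angle $\pi$ about the horizontal geodesic $\alpha_1=\{(x,0,0):x\in\mathbb R\}$ is an isometry of $\mathbb R^2\rtimes_A\mathbb R$ by Proposition \ref{iso} (the case of a horizontal geodesic parallel to the $x$-axis), and similarly for $\alpha_3$. Applying Schwarz reflection across $\alpha_1$ and $\alpha_3$ produces a minimal surface $\widehat\Sigma_n = \Sigma_n \cup R_{\alpha_1}(\Sigma_n)\cup R_{\alpha_3}(\Sigma_n)\cup R_{\alpha_3}R_{\alpha_1}(\Sigma_n)$ whose boundary is a geodesic polygon lying entirely in the two vertical planes $\{x=0\}$ and $\{x=n\}$ together with some horizontal segments in $\{z=0\}$ that are now interior to $\widehat\Sigma_n$; in particular the boundary of $\widehat\Sigma_n$ is everywhere transverse to $\partial_x$. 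Then, exactly as in the proof of Proposition \ref{prop-unique} (using that $\varphi_t(\partial\widehat\Sigma_n)\cap\widehat\Sigma_n=\emptyset$ for $t\neq 0$, where $\varphi_t$ is the flow of $\partial_x$, together with the maximum principle), $\widehat\Sigma_n$ is the unique compact minimal surface with its boundary, hence area-minimizing, hence stable. Now I would run the standard moving-planes / flow argument: slide $\widehat\Sigma_n$ by $\varphi_t$ and apply the interior and boundary maximum principles to conclude that $\widehat\Sigma_n$ is transverse to $\partial_x$ at every interior point; restricting to $\Sigma_n\subset\widehat\Sigma_n$ and noting that $\mathrm{int}\,\Sigma_n$ consists of interior points of $\widehat\Sigma_n$ (the sets $\alpha_1^n,\alpha_3^n$ became interior after reflection) gives the claim. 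Concretely, transversality to $\partial_x$ is equivalent to saying that along each integral line $\{(x,y_0,z_0):x\in\mathbb R\}$ the surface $\widehat\Sigma_n$ is locally a graph in $x$; the flow argument shows that if two points of $\widehat\Sigma_n$ lay on a common integral curve of $\partial_x$ one could find a first contact between $\widehat\Sigma_n$ and a translate $\varphi_t(\widehat\Sigma_n)$, forcing $\varphi_t(\widehat\Sigma_n)=\widehat\Sigma_n$ for some $t\ne 0$, contradicting compactness.

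I expect the main obstacle to be the behavior near the two vertical sides $\alpha_4^n$ and $\alpha_6^n$ (which lie in $\{x=0\}$) and near their endpoints on $\{z=0\}$: there $\partial_x$ is orthogonal to the boundary plane, so transversality of $\widehat\Sigma_n$ to $\partial_x$ up to the boundary is delicate and must be obtained from the boundary maximum principle (Hopf lemma) rather than from the graph property. One must check that $\widehat\Sigma_n$ meets the planes $\{x=0\}$ and $\{x=n\}$ transversally — equivalently, that its conormal there has nonzero $\partial_x$-component — which follows because those planes are themselves minimal (they are of the form $\Pi^{-1}(L)$) and $\widehat\Sigma_n$ is not contained in them; a contact of the tangent plane with $\partial_x$ at such a boundary point would violate the boundary maximum principle comparing $\widehat\Sigma_n$ with the minimal plane. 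Once this boundary transversality is in hand, the global statement follows from uniqueness and the flow of $\partial_x$ as above.
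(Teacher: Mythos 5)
Your route is genuinely different from the paper's, but it contains an error at its key step. The paper argues locally at a putative interior tangency point $p$: the plane through $p$ containing $\partial_x$ and tangent to $\Sigma_n$ is itself minimal, so $\Sigma_n$ meets it in at least four arcs emanating from $p$; since that plane meets $P_n$ in only two points (or one point and one of the segments $\alpha_1^n,\alpha_3^n$), not all arcs can reach the boundary, a closed nodal curve appears, it bounds a disk in the simply connected $\Sigma_n$, and sweeping by the parallel minimal planes plus the maximum principle gives a contradiction. Your argument instead hinges on the assertion that $\partial\widehat\Sigma_n$ is everywhere transverse to $\partial_x$ after reflecting across $\alpha_1$ and $\alpha_3$, and this is false. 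The rotations are $R_{\alpha_1}(x,y,z)=(x,-y,-z)$ and $R_{\alpha_3}(x,y,z)=(x,2a-y,-z)$, so $R_{\alpha_1}(\alpha_3^n)=\{(t,-a,0):0\le t\le n\}$ and $R_{\alpha_3}R_{\alpha_1}(\alpha_3^n)=\{(t,3a,0):0\le t\le n\}$. These segments remain in the boundary of your four-fold union (the sheets sit over $y\in[-a,0]$, $[0,a]$, $[a,2a]$, $[2a,3a]$ respectively, so nothing is glued along $y=-a$ or $y=3a$), and they are integral curves of $\partial_x$. The reflections merely push the offending horizontal sides outward; no finite number of reflections removes them. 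Consequently $\varphi_t(\partial\widehat\Sigma_n)\cap\widehat\Sigma_n\ne\emptyset$ for $0<|t|<n$, and both the Proposition~\ref{prop-unique}-style uniqueness argument and the first-contact sliding argument fail precisely near the horizontal sides, which is where the tangency issue lives.

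There is a second, independent gap even where the sliding argument does apply: a first-contact argument only yields that each integral curve of $\partial_x$ meets the surface in at most one point, and this is strictly weaker than transversality (consider the planar curve $y=x^3$ and the field $\partial_x$: every horizontal line meets it once, yet it is tangent to $\partial_x$ at the origin). Your phrase ``the surface is locally a graph in $x$'' conflates the topological graph property, which the flow gives, with the smooth one, which is the claim. To bridge this you would need an extra step, e.g.\ that $u=\langle N,\partial_x\rangle$ is a Jacobi function which is one-signed by the graph property and hence, by the strong maximum principle for the stability operator, either never vanishes in the interior or vanishes identically (the latter being excluded by the boundary data) --- or else fall back on the paper's local nodal-curve analysis.
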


\begin{proof} Fix $n.$ Suppose that at some point $p\in\mbox{int}\Sigma_n$ the tangent plane $T_p\Sigma_n$ contains the vector $\partial_x.$ As the planes that contain the direction $\partial_x$ are minimal surfaces, we have that $\Sigma_n$ and $T_p\Sigma_n$ are minimal surfaces tangent at $p,$ and then the intersection between them is formed by $2k$ curves, $k\geq 1,$ passing through $p$ making equal angles at $p.$ By the shape of $P_n$ (the boundary of $\Sigma_n$), we know that $T_p\Sigma_n$ intersects $P_n$ either in only two points or in one point and a segment of straight line ($\alpha_1^n$ or $\alpha_3^n$). Therefore, we will have necessarily a closed curve contained in the intersection. As $\Sigma_n$ is simply connected this curve bounds a disk in $\Sigma_n,$ but as the parallel planes to $T_p\Sigma_n$ are minimal surfaces, we can use the maximum principle to prove that this disk is contained in the plane $T_p\Sigma_n$ and then they coincide, which is impossible. Thus, the vector $\partial_x$ is transversal to $\Sigma$ at points $p\in\mbox{int}\Sigma_n.$
\end{proof}
 
Observe that, besides the interior points, the surfaces $\Sigma_n$ are also transversal to $\partial_x$ at the points in $\alpha_4$ and $\alpha_6,$ by the maximum principle with boundary. Thus rotation by angle $\pi$ around $\alpha_4$ (respectively $\alpha_6$) gives a minimal surface which is also transversal to the Killing field $\partial_x$ in the interior, extends the surface $\Sigma_n$ and has $\alpha_4^n$ (respectively $\alpha_6^n$) in the interior. Therefore, we have uniform curvature estimates for $\Sigma_n$ up to $\alpha_4\cup\alpha_6.$ 
 
Hence, for every compact contained in $\{z>0\}\cap \mathcal R,$ there exists a subsequence of $\Sigma_n$ that converges to a minimal surface. Taking an exhaustion by compact sets and using a diagonal process, we conclude that there exists a subsequence of $\Sigma_n$ that converges to a minimal surface $\Sigma$ that has $\alpha_4\cup\alpha_6$ in its boundary. From now on we will use the notation $\Sigma_n$ for this subsequence.

It remains to prove that in fact $\Sigma$ is a minimal surface with boundary $P_\infty.$ In order to do that, we will use the fact that each $\Sigma_n$ is a vertical graph in the interior. Let us denote by $u_n$ the function defined in int$\Omega(n)$ such that $\Sigma_n=\mbox{Graph}(u_n),$ where $\Omega(n)=\{(x,y,0): 0\leq x\leq n; 0\leq y\leq a\}.$

\begin{claim}
$u_{n-1}<u_n$ in {\rm int}$\Omega(n-1).$
\end{claim}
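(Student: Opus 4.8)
The plan is to compare the two minimal graphs $\Sigma_{n-1}=\mathrm{Graph}(u_{n-1})$ over $\mathrm{int}\,\Omega(n-1)$ and $\Sigma_n=\mathrm{Graph}(u_n)$ over $\mathrm{int}\,\Omega(n)$ by sliding one of them along the flow of the Killing field $X=\partial_x$, exactly as in the proof of Proposition~\ref{prop-unique}. First I would record the two features that make this work: both $\Sigma_{n-1}$ and $\Sigma_n$ are compact minimal surfaces whose interiors are $\Pi$-graphs (by Theorem~\ref{th-MMPR}), and both boundary polygons $P_{n-1}$ and $P_n$ are transverse to $X=\partial_x$, with each integral curve of $X$ meeting $P_{n-1}$ (resp.\ $P_n$) in at most one point. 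Since $\Omega(n-1)\subset\Omega(n)$, the graph $\Sigma_{n-1}$ lies in the region $\{\varphi_t(p):t\in\mathbb R,\ p\in\overline{\Omega(n)}\times[0,\infty)\}$ swept by the $X$-flow off $\Sigma_n$'s domain; I will use this to run a sweeping argument.

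The main step is the sliding argument. Because $\Sigma_{n-1}$ is compact, there is $t_1>0$ large with $\varphi_{t_1}(\Sigma_{n-1})\cap\Sigma_n=\emptyset$, so the set of $t$ with $\varphi_t(\Sigma_{n-1})\cap\Sigma_n\neq\emptyset$ is bounded; let $t_0=\sup\{t:\varphi_t(\Sigma_{n-1})\cap\Sigma_n\neq\emptyset\}$. For $t>t_0$ the surfaces are disjoint, and at $t=t_0$ they touch. I claim the touching point is interior to both surfaces: since $P_{n-1}$ and $P_n$ are each transverse to $X$ with at most one intersection per flow line, one checks directly from the explicit description of the sides of $P_{n-1}$ and $P_n$ that $\varphi_t(P_{n-1})\cap P_n=\emptyset$ for all $t\neq0$, and moreover $\varphi_t(\partial\Sigma_{n-1})$ stays off $\mathrm{int}\,\Sigma_n$ and $\varphi_t(\mathrm{int}\,\Sigma_{n-1})$ stays off $\partial\Sigma_n$ for $t=t_0>0$; hence the contact at $t_0$ is an interior tangency. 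By the maximum principle for minimal surfaces, $\varphi_{t_0}(\Sigma_{n-1})=\Sigma_n$, which is absurd since $t_0>0$ and the two surfaces have distinct boundaries. Therefore $\varphi_t(\Sigma_{n-1})\cap\Sigma_n=\emptyset$ for all $t>0$. Letting $t\to 0^+$ and using that $\Sigma_{n-1}$ and $\Sigma_n$ are graphs over overlapping domains with $u_{n-1}\le u_n$ on the common part of their boundaries (indeed $\partial\Omega(n-1)\cap\partial\Omega(n)$ consists of pieces of $\alpha_6^{n-1}=\alpha_6^n$, $\alpha_4^{n-1}=\alpha_4^n$ and of $\alpha_1,\alpha_3$, where the boundary heights agree, while the rest of $\partial\Omega(n-1)$ sits over the interior of $\Sigma_n$, where $u_n>0$), one gets $u_{n-1}\le u_n$ on $\mathrm{int}\,\Omega(n-1)$. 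Finally, if equality held at some interior point, the two minimal graphs would be tangent there from one side, and the interior maximum principle would force $u_{n-1}\equiv u_n$ on a neighborhood and then on all of $\mathrm{int}\,\Omega(n-1)$ by a standard connectedness argument, contradicting $u_n>u_{n-1}$ near $\partial\Omega(n)\setminus\partial\Omega(n-1)$. Hence $u_{n-1}<u_n$ strictly on $\mathrm{int}\,\Omega(n-1)$.

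The step I expect to be the main obstacle is verifying carefully that the contact point at the critical parameter $t_0$ is genuinely an interior point of both surfaces, rather than a boundary contact — this requires using the precise geometry of the polygons $P_{n-1}$ and $P_n$ (their shared vertical sides $\alpha_4,\alpha_6$ and horizontal sides $\alpha_1,\alpha_3$ on the coordinate planes) to rule out that $\varphi_{t_0}$ could bring a boundary edge of $\Sigma_{n-1}$ into the boundary of $\Sigma_n$. Once that transversality bookkeeping is done, both the strict inequality in the interior and the non-strict inequality on the overlapping boundary follow from the usual maximum principle arguments already used for Proposition~\ref{prop-unique}.
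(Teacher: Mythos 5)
There is a genuine gap in the central step. Your sliding argument rests on the assertion that $P_{n-1}$ and $P_n$ are transverse to $X=\partial_x$, that each integral curve of $X$ meets them at most once, and that ``one checks directly'' that $\varphi_t(P_{n-1})\cap P_n=\emptyset$ for all $t\neq 0$. This is false: the sides $\alpha_1^n=\{(t,0,0):0\le t\le n\}$ and $\alpha_3^n=\{(t,a,0):0\le t\le n\}$ lie along integral curves of $\partial_x$ (the paper itself uses that $\partial_x$ is \emph{tangent} to $\alpha_1^c$ and $\alpha_3^c$ in the flux computation), and the flow is just $\varphi_t(x,y,z)=(x+t,y,z)$. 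Hence $\varphi_t(\alpha_1^{n-1})=[t,\,n-1+t]\times\{0\}\times\{0\}$ overlaps $\alpha_1^n$ along a whole segment for every $0<t\le n$, so your $t_0$ is at least $n$ and the contact at $t_0$ occurs at boundary points such as $(n,0,0)$, not at an interior tangency. The interior maximum principle therefore cannot be invoked, and the claimed contradiction does not materialize. This is precisely the obstruction that forces the paper to introduce the perturbed polygons $\widetilde{P}_n^{\delta}$ in the first place.

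The paper's route is different and avoids this: it recalls that $\Sigma_n$ was constructed as the limit, as $\delta\to 0$, of the graphs $\widetilde{\Sigma}_n^{\delta}=\mathrm{Graph}(w_n^{\delta})$ over $\widetilde{\Omega}(\delta,n)$, whose boundaries $\widetilde{P}_n^{\delta}$ have their horizontal sides tilted so that they \emph{are} transverse to $\partial_x$ with each flow line meeting them at most once. One then slides $\Sigma_{n-1}$ against each fixed $\widetilde{\Sigma}_n^{\delta}$ (here the boundary--boundary overlaps disappear, because the translated horizontal edges of $P_{n-1}$ sit strictly below the graph $w_n^{\delta}>0$ over $\mathrm{int}\,\widetilde{\Omega}(\delta,n)$), concludes $\widetilde{\Sigma}_n^{\delta}$ lies above $\Sigma_{n-1}$, and lets $\delta\to 0$ to get $u_{n-1}\le u_n$ on $\mathrm{int}\,\Omega(n-1)$. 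Your final upgrade from the weak to the strict inequality via the interior maximum principle is fine and matches the paper, but it is downstream of the step that fails. To repair your proof you would need to replace the direct comparison of $\Sigma_{n-1}$ with $\Sigma_n$ by a comparison with the $\delta$-approximations (or otherwise perturb the boundaries off the $\partial_x$-flow lines) before taking a limit.
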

\begin{proof} Recall that each $\Sigma_n$ is the limit of a sequence of minimal graphs $\widetilde{\Sigma}_n^{\delta}=\mbox{Graph}(w_n^\delta)$ whose boundary is transversal to the Killing field $\partial x.$ Using the flow of the Killing field $\partial_x,$ we can prove that each $\widetilde{\Sigma}_n^\delta$ is above $\Sigma_{n-1},$ and then the limit surface $\Sigma_n$ has to be above $\Sigma_{n-1}.$ In fact, $\Sigma_n$ is strictly above $\Sigma_{n-1}$ in the interior, because as $\Sigma_n$ and $\Sigma_{n-1}$ are minimal surfaces, if they intersect at an interior point, there will be points of $\Sigma_n$ under $\Sigma_{n-1},$ and we already know that, by the property of $\widetilde{\Sigma}_n^\delta,$ this is not possible. 
\end{proof}

\begin{claim}
There are uniform gradient estimates for $\{u_n\}$ for points in $\alpha_1\cup \alpha_3.$
\end{claim}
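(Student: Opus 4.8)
The plan is to construct barriers adapted to the geometry near the horizontal segments $\alpha_1=\{(x,0,0):x>0\}$ and $\alpha_3=\{(x,a,0):x>0\}$, exactly as in the proof of Claim \ref{claim1} in the previous section, but now exploiting the translational symmetry in the $x$-direction rather than the diagonal one. First I would fix a small parameter $\delta>0$ and consider, for each $x_0>0$, a vertical strip of the form $\Pi^{-1}(L)$ over the horizontal line $L=\{(x,0,0)\}$, bounded by pieces lying in the planes $\{y=-\delta\}$, $\{y=\delta\}$, $\{z=0\}$ and a top arc at height $c_1$ chosen large enough. Since $\Pi^{-1}(L)$ is a minimal surface transverse to the Killing field $\partial_y$, a small perturbation of its top boundary by a curve contained in $\{y\geq 0\}$ (or $\{y\leq 0\}$) produces, via the standard graph-over-a-transverse-surface argument, a minimal surface $S$ whose interior meets the segment $\alpha_1$ with tangent planes that are uniformly bounded away from vertical, by the boundary maximum principle.

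Next I would use translations along the $x$-axis to slide copies of $S$ over all of $\alpha_1$: because $\partial_x$ is a Killing field and each $\Sigma_n$ has $\alpha_1$ in its boundary (with $\Sigma_n$ lying locally on one side, as it is a $\Pi$-graph over $\Omega(n)\subset\{y\geq 0\}$), the maximum principle forces $\Sigma_n$ to stay below the translated barriers $S$ in a fixed neighborhood of $\alpha_1$, uniformly in $n$. This simultaneously yields a uniform bound on $|u_n|$ near $\alpha_1$ and a uniform bound on $|\nabla u_n|$ there, which is the desired gradient estimate. The argument for $\alpha_3$ is identical after reflecting in the plane $\{y=a/2\}$, or just by placing the analogous strip over the line $\{(x,a,0)\}$ with the perturbation pushed into $\{y\leq a\}$.

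The main obstacle — and the point requiring the most care — is ensuring that the barrier $S$ can be taken with its relevant face at a fixed height independent of $n$, so that it really caps $\Sigma_n$ from above near $\alpha_1$ for every $n$. For $x_0$ large this is delivered by the earlier observation that, in the region $\{x\geq d\}$, all the $\Sigma_n$ lie below the plane $\{z=c_0\}$ (obtained by flowing the annulus $\mathcal A$ by $\partial_x$); for $x_0$ in the compact range $[0,d]$ one uses instead that the monotone family $\Sigma_n$ is already known to converge to the minimal surface $\Sigma$ there, so the heights stay locally bounded. One then chooses $c_1$ larger than this common bound and proceeds as above. A secondary technical point is that the perturbed barrier $S$ must be constructed so its boundary does not touch $P_n$ except transversally along $\alpha_1$, which is arranged by taking the $\delta$-width and the perturbation small; the transversality of $\partial_y$ to $\Pi^{-1}(L)$ guarantees the perturbed Plateau solution exists and depends continuously on the boundary data, so these smallness choices are harmless.
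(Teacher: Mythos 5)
Your overall strategy is the paper's: build a minimal barrier by perturbing a strip inside a vertical plane transverse to a Killing field, use the boundary maximum principle to conclude that the tangent planes of the perturbed surface $S$ along its unperturbed bottom edge are not vertical, translate $S$ onto $\alpha_1$ (resp.\ $\alpha_3$), and read off the gradient bound. But the barrier you actually describe does not work, and it fails exactly at the point you flag as the main obstacle. First, the description is internally inconsistent: the vertical plane $\Pi^{-1}(L)$ over $L=\{(x,0,0)\}$ is the plane $\{y=0\}$ and cannot be ``bounded by pieces lying in the planes $\{y=-\delta\}$ and $\{y=\delta\}$''; the barrier must be a strip inside a plane $\{y=y_0\}$ with $y_0<0$ (resp.\ $y_0>a$) parallel to $\alpha_1$ (resp.\ $\alpha_3$), which is then swept in the $y$-direction toward the boundary line --- it is this sweep (which you omit, mentioning only $x$-translations) that lets the maximum principle force $\Sigma_n$ to one side of $S$. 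Second, and more seriously, your barrier is a rectangle whose top edge sits at height $c_1$ directly above its foot, so for the sweep to proceed without $\partial S$ meeting $\Sigma_n$ you need a bound on $u_n$, uniform in $n$, directly over the foot point $x_0$. For $x_0\ge d$ this is available from the annulus $\mathcal A$ flowed by $\partial_x$, but for $x_0<d$ your fallback --- that the heights stay locally bounded because $\Sigma_n$ converges to $\Sigma$ --- is not justified at this stage: the convergence obtained so far holds only on compact subsets of $\{z>0\}\cap\mathcal R$ and yields no uniform bound on $u_n$ over points of $\Omega(\infty)$ with $x<d$ (indeed $u_n$ is unbounded near $\alpha_6$), and any such bound would in any case degenerate as $x_0\to 0$.

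The missing idea is the shape of the barrier. The paper takes a \emph{slanted} strip: in the plane $\{y=y_0\}$, the quadrilateral bounded by a short bottom edge $\beta_4$ at $z=0$ over $x\in[0,\delta]$, two parallel sloped edges rising with slope $c_0/d$ from $z=0$ to $z=c_0$ over a horizontal run of length $d$, and a short top edge $\beta_1$ at $z=c_0$ over $x\in[d,d+\delta]$; only $\beta_1$ is perturbed off the plane $\{y=y_0\}$. After translating by $s\ge 0$ along the $x$-axis, the foot sits over $x\in[s,s+\delta]$ while the perturbed top edge always sits over $x\in[s+d,s+d+\delta]\subset\{x\ge d\}$, where the uniform bound $\Sigma_n\cap\{x\ge d\}\subset\{z\le c_0\}$ is already known. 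Hence no height bound over the foot is ever needed, the $y$-sweep goes through for every $s\ge 0$ and every $n$, and the resulting gradient bound (essentially the slope $c_0/d$) is uniform along all of $\alpha_1\cup\alpha_3$ and in $n$. With this modification your argument becomes the paper's proof.
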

\begin{proof} We will use the same idea as in Claim \ref{claim1}. For $y_0>a$ and $\delta>0$ consider the vertical strip bounded by $\beta_1=\{(x,y_0,c_0): d\leq x\leq d+\delta\},$ $\beta_2=\{(t,y_0,\frac{c_0}{d}t):0\leq t\leq d\},$ $\beta_3=\{(t+\delta,y_0,\frac{c_0}{d}t):0\leq t\leq d\}$ and $\beta_4=\{(x,y_0,0):0\leq x\leq \delta\}.$ This is a minimal surface transversal to the Killing field $\partial_y,$ hence any small perturbation of its boundary gives a minimal surface with that perturbed boundary. Thus, if we consider a small perturbation of the boundary of this vertical strip by just perturbing a little bit $\beta_1$ by a curve contained in $\{y\leq y_0\}$ joining the points $(d,y_0,c_0)$ and $(d+\delta,y_0,c_0),$ we will get a minimal surface $S$ with this perturbed boundary. This minimal surface $S$ will have the property that the tangent planes at the interior points of $\beta_4$ are not vertical, by the maximum principle with boundary.

Applying translations along the $x$-axis and $y$-axis, we can use the translates of $S$ to show that $\Sigma_n$ is under $S$ in a neighborhood of $\alpha_3,$ and then we have uniform gradient estimates  for points in $\alpha_3$. Analogously, constructing similar barriers, we can prove that we have uniform gradient estimates in a neighborhood of $\alpha_1.$ 
\end{proof}

Observe that besides the gradient estimates, the translates of the minimal surface $S$ form a barrier for points in a neighborhood of $\alpha_1\cup \alpha_3.$

We have that $\Sigma_n$ is a monotone increasing sequence of minimal graphs with uniform gradient estimates in $\alpha_1\cup \alpha_3,$ and it is a bounded graph for points in $\{x\geq d\}$ (because of the barrier given by the annulus $\mathcal A$). Therefore, there exists a subsequence of $\Sigma_n$ that converges to a minimal surface ${\widetilde \Sigma}$ with $\alpha_1\cup \alpha_3$ in its boundary. As we already know that $\Sigma_n$ converges to the minimal surface $\Sigma,$ we conclude that in fact $\Sigma=\widetilde{\Sigma},$ and then $\Sigma$ is a minimal surface with $\alpha_1\cup\alpha_3\cup\alpha_4\cup\alpha_6$ in its boundary. Notice that we can assume that $\Sigma$ has $P_\infty$ as its boundary, with $\Sigma$ being of class $C^1$ up to $P_\infty\setminus \{(0,0,0),(0,a,0)\}$ and continuous up to $P_\infty.$ The expected ``singly periodic Scherk minimal surface'' is obtained by rotating recursively $\Sigma$ by an angle $\pi$ about the vertical and horizontal geodesics in its boundary.

\begin{theorem}
In any semidirect product $\mathbb R^2\rtimes_A \mathbb R,$ where $A=\left(
\begin{array}{ccc}
	0 & b\\
	c&0
\end{array}\right),$ there exists a periodic minimal surface similar to the singly periodic Scherk minimal surface in $\mathbb R^3.$
\end{theorem}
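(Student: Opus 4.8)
The plan is to reproduce, with the appropriate modifications, the scheme of Section~3, all of which has in fact been carried out in the preceding pages; I will describe it in the order in which the ingredients are used. First I would fix $c_0>0$ and $0<\epsilon<a$ small enough that $a+2\epsilon<\int_0^{c_0}\sqrt{a_{11}^2(z)+a_{21}^2(z)}\,{\rm d}z$, and for each $c>0$ form the geodesic polygon $P_c$ with the six sides $\alpha_1^c,\dots,\alpha_6^c$. Since $\Pi$ maps $P_c$ monotonically onto the boundary of the compact convex disk $\Omega(0,c)$, Theorem~\ref{th-MMPR} produces a compact embedded least-area disk $\Sigma_c$ with $\partial\Sigma_c=P_c$ whose interior is a $\Pi$-graph over $\mbox{int}\,\Omega(0,c)$; in particular each $\Sigma_c$ is stable and unique in the class of $\Pi^{-1}(\partial\Omega(0,c))$-bounded disks.

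The first genuine step is uniqueness of $\Sigma_c$ among \emph{all} compact minimal surfaces with boundary $P_c$. Unlike the doubly periodic case, $P_c$ itself is not transverse to a Killing field, so I would argue indirectly: approximate $\Omega(0,c)$ from inside by the regions $\Omega(\delta,c)$ and from outside by $\widetilde\Omega(\delta,c)$, whose Plateau solutions $\Sigma_c^\delta$, $\widetilde\Sigma_c^\delta$ have boundary transverse to $\partial_x$ and are therefore unique by the argument of Proposition~\ref{prop-unique}. Monotonicity in $\delta$ (via the flow of $\partial_x$ and the maximum principle) together with the barrier $v_c$ (the graph of the $\Pi$-projection solution over $\mbox{int}\,\Omega(0,c)$) yields minimal graphs $u_c\le v_c\le w_c$, i.e.\ two minimal disks $\Sigma_1,\Sigma_2$ with boundary $P_c$. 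Comparing the conormals $\nu_1,\nu_2$ along $P_c$ and invoking the Flux Formula for $\partial_x$ (both fluxes vanish, since $\partial_x$ is tangent along $\alpha_1^c\cup\alpha_3^c$ and the remaining sides contribute the same closed integral) forces $u_c=w_c$, hence $\Sigma_c=\Sigma_1=\Sigma_2$ is the unique compact minimal surface with boundary $P_c$.

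Next I would build the barrier annulus: with the parallelepiped faces $A,\dots,F$ as above, the area computations give $\mbox{area}\,C+\mbox{area}\,D+\mbox{area}\,E+\mbox{area}\,F<\mbox{area}\,A+\mbox{area}\,B$ for a suitable $d>\epsilon$ (this is exactly where $a+2\epsilon<\int_0^{c_0}\sqrt{a_{11}^2+a_{21}^2}$ enters), so the Douglas criterion produces a minimal annulus $\mathcal A$ with $\partial\mathcal A=\partial A\cup\partial B$ whose projection meets $\mbox{int}\,\Omega(\infty)$. Applying the maximum principle as $c$ increases from $c<\epsilon$ (where $\Sigma_c\cap\mathcal A=\emptyset$) keeps every $\Sigma_c$ below $\mathcal A$; translating $\mathcal A$ by the flow of $\partial_x$ bounds $\Sigma_n$ above by $\{z=c_0\}$ on $\{x\ge d\}$. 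Hence for the sequence $\Sigma_n$ with $\partial\Sigma_n=P_n$ there is a fixed $q\in\mbox{int}\,\Omega(\infty)$ with $p_n=\Sigma_n\cap\Pi^{-1}(q)$ bounded and, by monotonicity, convergent to some $p\in\Pi^{-1}(q)$. Then I collect the estimates: each $\Sigma_n$ is transverse to $\partial_x$ in its interior (tangency would force, via the structure of $P_n$ and the maximum principle for planes containing $\partial_x$, a closed intersection curve bounding a disk that must be planar — impossible), so Schwarz reflection across $\alpha_4$ and $\alpha_6$ gives stable minimal extensions and hence uniform curvature estimates for $\Sigma_n$ up to $\alpha_4\cup\alpha_6$; perturbed vertical strips $S$ transverse to $\partial_y$ give uniform gradient estimates near $\alpha_1\cup\alpha_3$ and also serve as barriers there. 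An exhaustion and diagonal argument extracts a subsequence converging to a minimal surface $\Sigma$, and the monotone graph convergence together with the $S$-barriers and the annulus barrier identify $\Sigma$ as a minimal surface with boundary $P_\infty$, of class $C^1$ up to $P_\infty\setminus\{(0,0,0),(0,a,0)\}$ and continuous up to $P_\infty$. Finally, since rotation by $\pi$ about a vertical geodesic is always an isometry and, by Proposition~\ref{iso}, rotation by $\pi$ about each horizontal geodesic in $\partial\Sigma$ is an isometry for $A$ of the given off-diagonal form, reflecting $\Sigma$ recursively about $\alpha_1,\alpha_3,\alpha_4,\alpha_6$ yields a complete, properly embedded minimal surface invariant under a single vertical translation — the analogue of the singly periodic Scherk surface — which proves the theorem.

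I expect the main obstacle to be precisely the uniqueness of $\Sigma_c$: one must check that the inner and outer approximating polygons $P_c^\delta$ and $\widetilde P_c^\delta$ are genuinely transverse to $\partial_x$ so that Proposition~\ref{prop-unique}'s argument applies verbatim, that the monotone families $u_c^\delta,w_c^\delta$ converge to minimal graphs with \emph{exactly} boundary $P_c$, and that the conormal comparison $\langle\nu_1,\partial_x\rangle<\langle\nu_2,\partial_x\rangle$ on the non-horizontal sides is strict while both fluxes vanish — this is the step where the more complicated geometry of $P_c$, as opposed to the cleaner contour of Section~3, is really used, and where the flux argument replaces the direct Killing-field sliding used there.
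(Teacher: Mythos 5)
Your proposal reproduces the paper's Section~4 essentially verbatim: the same inner/outer polygon approximation ($P_c^{\delta}$, $\widetilde P_c^{\delta}$) and flux-formula argument for uniqueness of $\Sigma_c$, the same Douglas-criterion annulus barrier, the same transversality-to-$\partial_x$/Schwarz-reflection curvature estimates and perturbed-strip gradient barriers, and the same limiting and reflection procedure, so it is correct and takes the paper's approach. The only slip is in the final description of the symmetry: composing the $\pi$-rotations about the two vertical (or the two horizontal) boundary geodesics gives the left translation $(x,y,z)\mapsto(x,y+2a,z)$, so the period is a horizontal rather than a vertical translation --- a cosmetic point that does not affect the existence statement.
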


\begin{flushleft}
\textsc{Instituto Nacional de Matem\' atica Pura e Aplicada (IMPA)}

\textsc{Estrada Dona Castorina 110, 22460-320, Rio de Janeiro-RJ, Brazil}

\textit{Email adress:} anamaria@impa.br
\end{flushleft}

\end{document}